\numberwithin{equation}{section}
\newcommand{\qtq}[1]{\quad\text{#1}\quad}
\newcommand{\R}{\mathbb{R}}
\newcommand{\eps}{\varepsilon}
\newtheorem{theorem}{Theorem}[section]
\newtheorem{lemma}[theorem]{Lemma}
\newtheorem{proposition}[theorem]{Proposition}
\theoremstyle{definition}
\newtheorem{definition}[theorem]{Definition}
\newtheorem{remark}[theorem]{Remark}
\theoremstyle{remark}
\begin{document}

\title[Cubic-quintic NLS]{Cubic-quintic NLS: scattering beyond\\the virial threshold}

\author[R. Killip]{Rowan Killip}
\address{Department of Mathematics, UCLA}
\email{killip@math.ucla.edu}

\author[J. Murphy]{Jason Murphy}
\address{Department of Mathematics \& Statistics, Missouri S\&T}
\email{jason.murphy@mst.edu}

\author[M. Visan]{Monica Visan}
\address{Department of Mathematics, UCLA}
\email{visan@math.ucla.edu}
\maketitle

\begin{abstract}  We consider the nonlinear Schr\"odinger equation in three space dimensions with combined focusing cubic and defocusing quintic nonlinearity.   This problem was considered previously by Killip, Oh, Pocovnicu, and Visan, who proved scattering for the whole region of the mass/energy plane where the virial quantity is guaranteed to be positive.  In this paper we prove scattering in a larger region where the virial quantity is no longer guaranteed to be sign definite.\end{abstract}

\section{Introduction}
We consider the cubic-quintic NLS in three space dimensions:
\begin{equation}\label{nls}
\begin{cases}
i\partial_t u + \Delta u = -|u|^2 u + |u|^4 u, \\
u|_{t=0} = u_0\in H^1(\R^3),
\end{cases}
\end{equation}
which describes the evolution of a complex field $u$ under the Hamiltonian
\[
E(u)=\int_{\R^3} \tfrac12 |\nabla u(x)|^2 + \tfrac16 |u(x)|^6 - \tfrac14 |u(x)|^4\,dx.
\]
This evolution also conserves the mass, defined by
\[
M(u)=\int_{\R^3}|u(x)|^2\,dx.
\]

We work here in $H^1(\R^3)$ because this is precisely the class of initial data for which both the mass and energy are finite.  The initial value problem \eqref{nls} was shown to be globally well-posed for such data by Zhang in \cite{Zhang}, building on the paper \cite{CKSTT} that treated the purely quintic nonlinearity.

We are concerned here with the long-time behavior of solutions.  For small initial data, standard arguments demonstrate that solutions scatter both forward and backward in time.  This means that there are functions $u_\pm\in H^1$ so that 
\begin{equation}\label{scattering}
\lim_{t\to\pm\infty} \|u(t)-e^{it\Delta}u_{\pm}\|_{H^1} = 0.
\end{equation}
On the other hand, our equation admits solitary wave solutions which evidently do not scatter.

The natural question then arises of determining the sharp threshold at which scattering breaks down.  There is already a large body of work on determining such thresholds for a wide variety of models, building on the paradigm introduced in \cite{KM}; see
\cite{Akahori',AkahoriNawa,Dodson1,Dodson2,DHR,FXC,Guevara,HR,KV,Miao},  for example.  This approach has two main steps: First one shows that the threshold for scattering is witnessed by a minimal counterexample that (by virtue of its minimality) is almost-periodic (modulo symmetries).  One then uses the virial identity (or close analogue) to prove that such almost periodic solutions cannot exist for initial data below the soliton threshold.

The virial identity for \eqref{nls} takes the following form:
\begin{equation*}
\tfrac{d\ }{dt} \ \bigl\langle u(t),\, \tfrac{1}{4i}( x\cdot\nabla + \nabla \cdot x) u(t)\bigr\rangle = V(u(t)),
\end{equation*}
where
\begin{equation}\label{E:V}
V(u)=\int |\nabla u(x)|^2 + |u(x)|^6 - \tfrac34 |u(x)|^4\,dx.
\end{equation}

As part of their wide-ranging investigation of the problem, the authors of \cite{KOPV} adapted this strategy to the problem of determining scattering thresholds for  \eqref{nls}.  In addition to all the positive results and the new ideas that were used to obtain them, the paper \cite{KOPV} also made an important contribution by discovering the inadequacy of the existing methods for obtaining a definitive scattering threshold.  Our ambition in this paper is to take a first step beyond the limitations of the existing approach.  Indeed, building on their work, we will be able to expand the frontier of the scattering region obtained in \cite{KOPV} everywhere that such expansion is not manifestly forbidden by the existence of solitons.

Many of the results of \cite{KOPV} are most readily framed with reference to Figure~\ref{F:1}.  This mass-energy diagram is purely schematic; while precise numerics are presented in \cite{KOPV}, the salient features live at very different length scales and so cannot be represented intelligibly on a single graph.

\begin{figure}[h]
\noindent
\begin{center}
\fbox{
\setlength{\unitlength}{1mm}
\begin{picture}(95,45)(-7,-7)
\put(0,-5){\vector(0,1){37}}\put(-5,30){$E$} 
\put(-5,0){\vector(1,0){87}}\put(82,-3){$M$} 
\put(30,0){\line(0,-1){2}}\put(30,-5){\hbox to 0mm{\hss$m_0$\hss}}
\put(40.5,0){\line(0,-1){2}}\put(40.5,-5){\hbox to 0mm{\hss$m_1$\hss}}
\put(65,0){\line(0,-1){2}}\put(65,-5){\hbox to 0mm{\hss$m_2$\hss}}
%
%
\linethickness{0.4mm}
\qbezier(36,18)(60,7)(65,0)
\qbezier(36,18)(50,12)(80,8)
%
%
\linethickness{0.05mm}
\qbezier(30,21)(30,25)(45,27)
\qbezier(30,21)(30,16.5)(55,13.5)
%
%
\multiput(30,21)(0,1){14}{\circle*{0.1}}
%
%
\qbezier(1,31)(02,32)(03,33)
\qbezier(0,27)(01,28)(06,33)\qbezier(0,24)(01,25)(09,33)
\qbezier(0,21)(03,24)(12,33)\qbezier(0,18)(04,22)(15,33)
\qbezier(0,15)(06,21)(18,33)\qbezier(0,12)(07,19)(21,33)
\qbezier(0,09)(09,18)(24,33)\qbezier(0,06)(10,16)(27,33)
\qbezier(0,03)(12,15)(30,33)
\qbezier(0,00)(15,15)(30,30)\qbezier(03,0)(16,13)(30,27)\qbezier(06,0)(18,12)(30,24)
\qbezier(09,0)(13,4)(15.6,6.6)\qbezier(18.7,9.7)(25,16)(30,21)\put(15.7,7){$\mathcal R$}
%
%
\qbezier(12,0)(15, 3)(31.1,19.1)
\qbezier(15,0)(18, 3)(33.0,18.0)
\qbezier(18,0)(21, 3)(35.2,17.2)
\qbezier(21,0)(24, 3)(37.5,16.5)
\qbezier(24,0)(27, 3)(39.9,15.9)
%
%
\qbezier(27,0)(30, 3)(42.1,15.1)
\qbezier(30,0)(33, 3)(44.1,14.1)
\qbezier(33,0)(36, 3)(46.1,13.1)
\qbezier(36,0)(39, 3)(48.1,12.1)
\qbezier(39,0)(42, 3)(50.0,11.0)
\qbezier(42,0)(45, 3)(52.0,10.0)
\qbezier(45,0)(48, 3)(53.9, 8.9)
\qbezier(48,0)(51, 3)(55.7, 7.7)
\qbezier(51,0)(54, 3)(57.5, 6.5)
\qbezier(54,0)(57, 3)(59.3, 5.3)
\qbezier(57,0)(60, 3)(61.0, 4.0)
\qbezier(60,0)(63, 3)(62.6, 2.6)
\qbezier(63,0)(64, 1)(64.1, 1.1)
\end{picture}
}
\end{center}
\caption{Schematic depiction of the mass-energy plane showing soliton solutions (heavy curve), non-soliton virial obstruction (light curve), and the scattering region $\mathcal R$ from \cite{KOPV}.}
\label{F:1}
\end{figure}
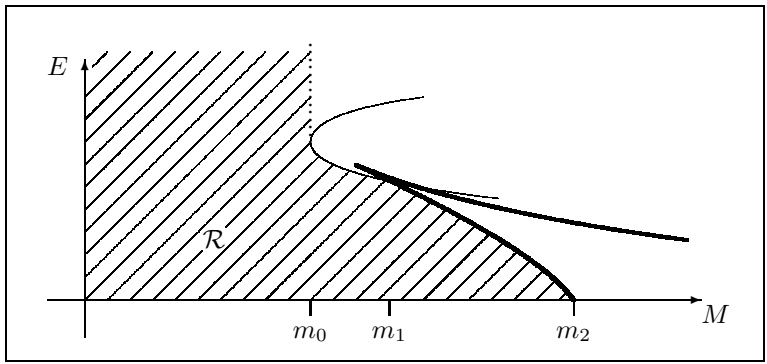

The heavy line in Figure~\ref{F:1} traces the mass-energy curve of ground-state soliton solutions.  By ground-state soliton, we mean an $H^1$-solution $P_\omega$ to
$$
-\Delta P_\omega + |P_\omega|^4P_\omega - |P|^2P_\omega + \omega P_\omega=0
$$
that is radially symmetric decreasing.  Such solutions exist if and only if $0<\omega<\tfrac3{16}$ and are unique for each such $\omega$; see \cite{KOPV}.

This mass-energy curve $\omega\mapsto(M(P_\omega),E(P_\omega))$ continues beyond the edges of Figure~\ref{F:1}. The convex branch is asymptotic to the $M$ axis, while the concave branch continues toward infinite mass and infinitely negative energy.  Note that we have only plotted the positive energy part of the mass-energy plane, since it is readily shown that no scattering solution can have negative energy.  We write $m_2$ for the point were the curve of solitons crosses the mass axis.  It is shown in \cite{KOPV} that for masses less than $m_2$, all solutions (other than zero) have positive energy.  While individual solutions with mass larger than $m_2$ may scatter, scattering cannot be guaranteed for any mass-energy pair in this regime. 

The mass $m_0$ is defined as the least mass at which it is possible to have $V(u)=0$ without $u\equiv 0$.  The first big surprise is that $m_0$ is strictly smaller than the smallest mass of any soliton (which coincides with the cusp in the heavy curve).  This was proved rigorously in \cite{KOPV}.  It is also shown there that $27 m_0^2= 16 m_2^2$.

This leads us to the natural question: What are these minimal objects that achieve zero virial?  The answer, discovered in \cite{KOPV}, is that they are exotic rescalings of soliton solutions in the sense that the rescaling preserves neither mass nor energy.  (Due to the combined nonlinearity, equation \eqref{nls} has no scaling symmetry.)  These rescaled solitons are defined by the equation
\begin{equation}\label{D:Romega}
R_\omega(x):=\sqrt{\tfrac{1+\beta(\omega)}{4\beta(\omega)}}\,P_\omega\!\Bigl(\tfrac{3(1+\beta(\omega))}{4\sqrt{3\beta(\omega)}}x\Bigr),
	\qtq{where} \beta(\omega):=\tfrac{\|P_\omega\|_{L^6}^6}{\|\nabla P_\omega\|_{L^2}^2}.
\end{equation}
The mass-energy curve of these rescaled solitons is shown as the light weight curve in Figure~\ref{F:1}.  

Consider now the open region $\mathcal{R}$ shown in Figure~\ref{F:1}.  The boundary of $\mathcal R$, which is not included in $\mathcal R$, is comprised partly of the mass-energy curve of solitons (heavy curve), partly by the curve of rescaled solitons (light curve), and partly by the mass $m_0$ (dotted line). 

In \cite{KOPV}, it is shown that $V(u)>0$ for every state $u$ whose mass and energy lies in the region $\mathcal R$.  In fact, the region $\mathcal R$ is the  maximal region with this property in the following sense:  Defining the function
\begin{equation}\label{EVdefn}
E^V_{\min}(m) =\inf\{E(u):u\in H^1(\R^3),\ M(u)=m,\ \text{and }V(u)=0\},
\end{equation}
we have $\mathcal R = \{(m,e) : e<E_V(m)\}$.  Note that $E^V_{\min}(m)=\infty$ for $m<m_0$.

In this paper, we prove that scattering holds in a still larger region $\mathcal B$ of the mass-energy plane:

\begin{theorem}\label{T} There is an open region $\mathcal B\subseteq \R^2$ so that every solution $u$ to \eqref{nls} with $\bigl(M(u),E(u)\bigr)\in \mathcal B$ belongs to $L^{10}(\R\times\R^3)$ and so scatters (in both time directions) in the sense of \eqref{scattering}.  The region $\mathcal B$ is larger than $\mathcal R$:\\
(a) There is an $m>m_0$ so that $\mathcal B\supseteq(0,m)\times(0,\infty)$; and\\
(b) If $(m,e)\in\partial \mathcal R$ is not achieved by a soliton, then $(m,e)\in\mathcal B$.
\end{theorem}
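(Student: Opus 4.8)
The plan is to follow the concentration-compactness/rigidity paradigm of \cite{KM} as adapted in \cite{KOPV}, but to replace the single sharp virial functional $V$ by a \emph{family} of deformed virial-type functionals so that positivity can be recovered in the enlarged region $\mathcal B$ where the plain $V$ is no longer sign-definite. Concretely, I would define $\mathcal B$ to be the set of mass-energy pairs $(m,e)$ for which there exists a suitable weight (a truncated or modified dilation generator, or equivalently a variationally optimized virial vector field) whose associated Morawetz-type quantity is coercive along the energy surface. The crucial preliminary is a sharp variational analysis: I would characterize the boundary of the scattering region as the infimum of $E$ over states with a fixed mass that saturate the relevant deformed virial identity, and show via the Gagliardo--Nirenberg and the rescaled-soliton profiles $R_\omega$ of \eqref{D:Romega} that this infimum is attained exactly by solitons along the heavy curve and is strictly exceeded elsewhere on $\partial\mathcal R$. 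This is what makes parts (a) and (b) plausible: away from genuine solitons there is a strict gap, and strict gaps can be exploited to push the frontier outward.

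With $\mathcal B$ in hand, the first major step is the \textbf{construction of a minimal counterexample}. Assuming Theorem~\ref{T} fails, the set of mass-energy pairs in $\mathcal B$ admitting a non-scattering solution is nonempty, and by the stability theory and the linear profile decomposition adapted to \eqref{nls} (the combined-nonlinearity version is already developed in \cite{KOPV}), the failure is witnessed at a threshold by an almost-periodic solution $u$ modulo the symmetries of the equation (translation and phase, but crucially \emph{not} scaling). I would import the profile decomposition and the nonlinear profile approximation essentially verbatim from \cite{KOPV}, since the combined cubic-quintic flow and its small-data/perturbation theory do not see which admissible region we are working in; only the final rigidity input changes. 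The output is a global, nonzero, almost-periodic solution $u$ whose mass-energy pair lies in $\overline{\mathcal B}$, with a frequency scale function $N(t)$ and spatial center $x(t)$.

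The heart of the argument, and the step I expect to be the \textbf{main obstacle}, is the \textbf{rigidity/virial step}. For the region $\mathcal R$ one simply integrates $V(u(t))$, which is uniformly positive, against a truncated weight to derive a contradiction with boundedness of the Morawetz quantity. That fails in $\mathcal B\setminus\mathcal R$ precisely because $V$ can vanish or change sign. My plan is to run a \emph{localized, frequency- and space-truncated virial inequality} with the optimized vector field identified in the variational step, and to control the error terms using the almost-periodicity (compactness of the orbit in $H^1$ modulo symmetries) together with the strict variational gap established on $\partial\mathcal R$ away from solitons. The delicate points will be: (i) showing the truncation errors are $o(1)$ of the main coercive term, which requires quantitative decay of the almost-periodic solution outside the compact core and careful commutator estimates for the modified generator; and (ii) ruling out the degenerate \emph{soliton-like} scenario, where $N(t)$ and $x(t)$ stabilize and $u$ resembles a rescaled soliton $R_\omega$ — here the exclusion must come from the strict energy inequality in (b), since by hypothesis $(m,e)$ is \emph{not} achieved by a genuine soliton, so any such limiting profile would violate the sharp variational characterization of the soliton curve. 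Combining the coercivity of the deformed virial functional on $\mathcal B$ with the contradiction extracted from either the escaping or the stationary regime closes the argument, yielding $u\in L^{10}(\R\times\R^3)$ and hence scattering via the standard finite-$L^{10}$-norm criterion; parts (a) and (b) then follow by verifying directly that the deformed functional is coercive on a mass strip $(0,m)\times(0,\infty)$ for some $m>m_0$ and on the non-soliton part of $\partial\mathcal R$, respectively.
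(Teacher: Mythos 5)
Your concentration-compactness phase matches the paper (both reduce matters to precluding almost periodic solutions via the machinery of \cite{KOPV}, as in Proposition~\ref{P:cc}), but your rigidity phase contains a genuine gap, and it sits exactly where the paper's novelty lies. You define $\mathcal B$ as the set of mass-energy pairs admitting some deformed, ``variationally optimized'' virial vector field whose associated functional is coercive on the energy surface, and then assert that parts (a) and (b) ``follow by verifying directly'' that such coercivity holds. No construction of this vector field is given, and no argument is offered that such a pointwise-in-time coercive functional exists anywhere beyond $\mathcal R$. This is not a routine verification: beyond $\partial\mathcal R$ there exist genuine states -- the rescaled solitons $R_\omega$ and small perturbations thereof -- at which the virial vanishes or goes negative, and the paper's starting point (crediting \cite{KOPV}) is precisely that methods based on finding a sign-definite static functional are inadequate here. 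Your proposal defers the entire difficulty into an unproved existence claim for the deformed weight, so the enlargement $\mathcal B \supsetneq \mathcal R$ is assumed rather than proved.

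The missing idea is dynamical rather than variational, and your handling of the ``soliton-like scenario'' shows the gap concretely. You propose to exclude a limiting profile resembling $R_\omega$ on the ground that it ``would violate the sharp variational characterization of the soliton curve''; but it would not -- $R_\omega$ is a legitimate zero-virial optimizer at its mass-energy pair, so no static variational inequality rules it out. What rules it out is the flow: by uniqueness of $H^1$ solutions together with the gauge and translation symmetries, any trajectory along which $V$ vanishes identically must be a solitary wave, and $R_\omega$ (for $(M(R_\omega),E(R_\omega))$ on the non-soliton boundary) is not one. Hence the solution launched from $R_\omega$ satisfies $\int_{-1}^1 V(v(t))\,dt>0$, and by continuity the same lower bound holds for all data within $\delta$ of $R_\omega$ modulo symmetries (Lemma~\ref{L:delta'}). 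The paper then combines this with a compactness statement (Lemma~\ref{L:PS3}: small virial at fixed mass-energy forces proximity to $R_\omega$) and a covering argument to show that the \emph{time-averaged} virial of any almost periodic solution is bounded below, contradicting \eqref{LV-LB}; positivity is recovered only after integration over unit time intervals, never pointwise. For part (a), likewise, no deformed functional is needed: the paper uses the algebraic identity $V(u)=2E(u)-\tfrac{3}{64}M(u)+\tfrac13\int|u|^2\bigl[|u|^2-\tfrac38\bigr]^2dx$ to show the plain virial is coercive once $E(u)\geq m_2$ and $M(u)\leq m$, and then glues this high-energy regime to the bounded-energy region supplied by part (b). Without the time-averaged positivity mechanism (or some substitute), your argument cannot close.
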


With reference to Figure~\ref{F:1}, we see from (a) that the whole dotted line on the boundary may be moved to the right (remaining vertical!).  Part (b) then completes our promise to expand the region of scattering in every place where this is not rigorously forbidden by the existence of solitons (which evidently do not scatter).  That is, we extend the scattering region across the entire portion of $\partial \mathcal R$ comprised by the light-weight curve.

In Figure~\ref{F:1} we see that for $m\geq m_0$, the left portion of $\partial \mathcal{R}$ is delimited by rescaled solitons and the right portion by solitons.  The mass at which this transition takes place is marked $m_1$.  The validity of this description was shown in \cite{KOPV} via numerics.  The purely analytical arguments given there show only that the left-most portion is made up of rescaled solitons and that the right-most part is made up solely of solitons; the possibility that roles are exchanged multiple times is not excluded.

The scattering claim in Theorem~\ref{T} follows from a more quantitative assertion that we will prove, namely, that the spacetime bound
\begin{equation}\label{STB}
\|u\|_{L_{t,x}^{10}(\R\times\R^3)} \leq C\bigl(M(u),E(u)\bigr)
\end{equation}
holds for every solution $u$ with $(M(u),E(u))\in \mathcal B$.  Here $C$ is some unspecified function.  The same type of bound was proven in \cite{KOPV} for $(m,e)\in\mathcal R$ and indeed for other equations in many papers preceding it.  One novelty of \cite{KOPV} is the appearance of two parameters (both mass and energy) in RHS\eqref{STB}.  It is worth emphasizing that due to the concavity of the soliton portion of $\partial\mathcal R$, the arguments in \cite{KOPV} cannot be based on a free energy of the form $E+\lambda M$ as had proven successful for a number of similar problems.  The need to induct on two parameters leads us to consider the following partial order on mass-energy pairs:

\begin{definition}\label{D:SW} We write $(m,e)\preccurlyeq (m_0,e_0)$ to indicate that $m\leq m_0$ and $e\leq e_0$.  That is, $(m,e)$ is southwest of $(m_0,e_0)$ in the mass-energy plane.
\end{definition}

With these preliminaries set, we can establish the following proposition.  As we will explain, this is essentially a recapitulation of the concentration compactness analysis in \cite{KOPV}. 

\begin{proposition}\label{P:cc}
Fix $0<m<m_2$ and $e>0$.  Then exactly one of the two following possibilities hold:\\
{\upshape (i)} There exists $\eps>0$ so that
\begin{equation}\label{STB'}
\|u\|_{L_{t,x}^{10}(\R\times\R^3)} \leq \eps^{-1} \qtq{whenever}  (M(u),E(u))\preccurlyeq (m+\eps,e+\eps).
\end{equation}
{\upshape (ii)} There is a global solution $u\in C_t(\R;H^1)$ to \eqref{nls} with $(M(u),E(u))\preccurlyeq (m,e)$.  Moreover, for this solution there
is a function $c:\R\to \R^3$ so that 
\begin{equation}\label{AP}
\{ u(t,x-c(t)) : t\in \R\} \text{ is precompact in $H^1(\R^3)$ and } c(t)=o(t) \text{ as }t\to \pm\infty.
\end{equation}
\end{proposition}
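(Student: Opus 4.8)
The plan is to establish the dichotomy by contraposition: assuming that alternative (i) fails, I will construct the almost-periodic solution demanded in (ii). The two alternatives are genuinely exclusive, since any solution obeying \eqref{AP} has infinite $L^{10}_{t,x}$ norm and so cannot satisfy \eqref{STB'}. Failure of (i) means that for every $n$ there is a solution $u_n$ to \eqref{nls} with $(M(u_n),E(u_n))\preccurlyeq(m+\tfrac1n,e+\tfrac1n)$ yet $\|u_n\|_{L^{10}_{t,x}}>n$. The key preliminary observation is that the mass bound together with the upper bound on the energy forces a uniform $H^1$ bound on the orbits: interpolating $\|u\|_{L^4}^4\lesssim\|u\|_{L^2}\|u\|_{L^6}^3$ and applying Young's inequality lets the defocusing quintic term absorb the focusing quartic term, yielding $\|\nabla u\|_{L^2}^2\le 2E(u)+C\,M(u)$ and, in particular, an a priori lower bound on the energy. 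Passing to a subsequence, $M(u_n)\to M_\infty\le m$ and $E(u_n)\to E_\infty\le e$, and the small-data theory keeps these limits bounded away from the trivially scattering regime. I would moreover choose $(M_\infty,E_\infty)$ to be minimal, with respect to $\preccurlyeq$, among all mass-energy limits of such blow-up sequences lying southwest of $(m,e)$; such a minimal pair exists by compactness, since the relevant set is closed and the masses and energies are confined to a bounded region. At this minimal pair, every strictly southwest pair admits a finite uniform spacetime bound. Finally, after a time translation I normalize each $u_n$ so that its $L^{10}_{t,x}$ norm diverges on both half-lines $\{\pm t>0\}$.

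The core of the argument is a Palais–Smale condition modulo the symmetries of \eqref{nls}. I would apply the $H^1$ linear profile decomposition for $e^{it\Delta}$, adapted as in \cite{KOPV} to carry space-translation parameters $x_n^j$, time-translation parameters $t_n^j$, and scales $\lambda_n^j$ that either remain $\equiv1$ or tend to $0$; since the orbits are $H^1$-bounded, any frequency (Galilean) parameters stay bounded and may be absorbed into the profiles. Feeding each profile into the nonlinear flow, I distinguish two regimes: a profile with $\lambda_n^j\to0$ is governed, after rescaling, by the energy-critical defocusing quintic NLS, whose global spacetime bounds are already known and whose solutions therefore scatter; a profile with $\lambda_n^j\equiv1$ evolves under the full cubic–quintic equation. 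The decisive structural input is that both the mass and the energy decouple along the decomposition. Consequently, if two or more profiles were nontrivial, the additivity of the (positive) masses and of the energies would place each nonlinear profile at a mass-energy pair strictly southwest of the minimal pair $(M_\infty,E_\infty)$, hence in the region where a finite spacetime bound holds; the stability theory for \eqref{nls} would then assemble these into a finite bound for $u_n$ itself, contradicting $\|u_n\|_{L^{10}_{t,x}}>n$. Thus exactly one profile survives, it sits at a fixed scale $\lambda_n\equiv1$ (a concentrating bubble would scatter by the energy-critical theory and so cannot be the obstruction), and $u_n(0)$ converges in $H^1$, modulo a spatial translation, to some $\phi$.

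I would then take $u$ to be the maximal-lifespan solution with data $\phi$; it is global by \cite{Zhang}, its mass-energy satisfies $(M(u),E(u))\preccurlyeq(m,e)$, and its $L^{10}_{t,x}$ norm is infinite in both time directions. Precompactness of $\{u(t,\cdot-c(t))\}$ in $H^1$ then follows by running the same Palais–Smale analysis along an arbitrary sequence of times, which is precisely the statement that every time-translate of $u$ is $H^1$-compact modulo a spatial shift. For the sublinearity $c(t)=o(t)$, I would first exploit the Galilean symmetry to normalize the momentum of the critical element to zero: a boost preserves mass and the $L^{10}_{t,x}$ norm while changing the energy by a quantity minimized at $\xi=-P(u)/M(u)$, so a nonzero momentum would yield a blow-up solution of the same mass but strictly smaller energy, violating the minimality of $(M_\infty,E_\infty)$ in $\preccurlyeq$. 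With vanishing momentum and $H^1$-precompactness in hand, the bound $c(t)=o(t)$ follows from the now-standard travelling-speed argument for almost-periodic solutions, as carried out in \cite{KOPV}.

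The step I expect to be the main obstacle is the \emph{simultaneous} decoupling of both mass and energy together with the correct placement of every nonlinear profile strictly southwest of the limiting pair in the partial order $\preccurlyeq$. Unlike the pure-power setting, where a single scaling-invariant quantity is decoupled and one may induct on it, here the absence of a scaling symmetry compels an induction on the two-dimensional parameter $(m,e)$, and one must reconcile the fixed-scale profiles—evolving under the full combined nonlinearity—with the concentrating bubbles governed by the energy-critical quintic flow. Verifying both that such bubbles cannot themselves be the critical element and that the energy-space stability theory is robust enough to superpose profiles living at disparate scales is the crux of the matter; this is exactly the concentration-compactness technology developed in \cite{KOPV}, which I would invoke for the supporting profile-decomposition and perturbation lemmas.
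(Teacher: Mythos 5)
Your proposal is correct and takes essentially the same route as the paper: the paper likewise passes to a $\preccurlyeq$-minimal pair at which (i) fails, extracts a sequence of solutions with diverging $L^{10}_{t,x}$ norm, and invokes the concentration-compactness machinery of \cite{KOPV} (Proposition~9.1 and Theorem~9.6 for the precompact critical element, Proposition~10.2 --- via exactly the zero-momentum Galilean-boost argument you describe --- for $c(t)=o(t)$). The only difference is presentational: the paper cites that machinery wholesale, while you sketch its internals (profile decomposition with fixed and vanishing scales, energy-critical bubbles handled by the defocusing quintic theory, mass/energy decoupling feeding the two-parameter induction, stability), which is precisely the content of the cited results.
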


\begin{proof}
If (i) holds at the original pair $(m,e)$, then the matter is settled: Scattering holds and so \eqref{AP} cannot.

Henceforth, we assume that (i) does not hold for any $\eps>0$ and must prove (ii).  For this purpose, we may safely replace $(m,e)$ with any of its $\preccurlyeq$-predecessors.  We choose $(m,e)$ to be a $\preccurlyeq$-minimal pair for which (i) fails.

As (i) fails at $(m,e)$, there must be a sequence of solutions $u_n(t)$ to \eqref{nls} with
$$
M(u_n)\to m,\quad E(u_n)\to e,\qtq{and}  \bigl\|u_n\bigr\|_{L_{t,x}^{10}(\R\times\R^3)} \to \infty.
$$
Applying \cite[Proposition~9.1]{KOPV} exactly as in the proof of \cite[Theorem~9.6]{KOPV} we obtain the existence of a solution $u$ satisfying the precompactness claim in \eqref{AP}.  This compactness relies on the minimality of $(m,e)$.  That the spatial center function $c(t)$ must be $o(t)$ also follows from this minimality; see \cite[Proposition~10.2]{KOPV}.
\end{proof}

We are now ready to discuss how we are to prove Theorem~\ref{T}.  The argument has two main phases.  In phase one, we prove that a point in $\partial\mathcal R$ can only support an almost periodic solution (that is, one satisfying \eqref{AP}) if it supports a solitary wave solution.  It then follows from Proposition~\ref{P:cc} that scattering extends to a small neighborhood of any point $(m,e)\in\partial\mathcal R$ that is not achieved by a soliton.  The key observation driving the phase-one argument is that while individual functions with mass-energy belonging to $\partial \mathcal R$ may have vanishing virial, the only trajectories that maintain zero virial are the soliton solutions.  Actually, this is not quite enough:  In order to preclude almost periodic solutions we need a quantitative lower bound.  Evidently, this cannot hold pointwise in time; nevertheless, we will be able to prove that such a bound does hold for the integral over fixed-size time intervals.

While phase one provides a resulting scattering region that fulfills part (b) of Theorem~\ref{T}, there is no reason to believe it satisfies (a).  However, it does show that given $E_*$, there is an $m=m(E_*)>m_0$ so that scattering holds in the rectangle $(0,m)\times(0,E_*)$.  To complete phase two of the proof, we show that there is a suitable choice of $E_*$ that allows us to employ a different argument to prove that scattering extends to the entire strip $(0,m)\times(0,\infty)$.

\section*{Acknowledgments} R.~K. was supported by NSF grant DMS-1856755.  J.~M. was supported by a Simons Collaboration Grant.  M.~V. was supported by NSF grant DMS-1763074.

\section{Preliminaries}

In order to prove scattering, we must preclude solutions to \eqref{nls} with the property \eqref{AP}.  Like many of our predecessors, we will do this using a localized version of the virial identity.  The novel aspect of this paper is \emph{how} we falsify \eqref{LV-LB}.

\begin{proposition}[The localized virial argument]\label{P:LV} If $u$ is a solution to \eqref{nls} satisfying \eqref{AP}, then 
\begin{equation}\label{LV-LB}
\lim_{T\to\infty} \frac{1}{T}\int_0^T  V(u(t))\,dt = 0,
\end{equation}
where $V(u)$ is as in \eqref{E:V}. 
\end{proposition}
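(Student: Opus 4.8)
The plan is to use a smooth, spatially localized version of the virial weight and exploit the almost-periodicity hypothesis \eqref{AP} to show that the boundary terms in the time-averaged virial identity are negligible, forcing the time-averaged virial to vanish. Concretely, I would introduce a cutoff and define the localized virial quantity
\begin{equation*}
Z_R(t) = \bigl\langle u(t),\, \tfrac{1}{2i}\bigl(\nabla\cdot(\nabla\phi_R) + (\nabla\phi_R)\cdot\nabla\bigr) u(t)\bigr\rangle,
\end{equation*}
where $\phi_R(x)=R^2\phi(x/R)$ for a fixed smooth radial $\phi$ that agrees with $\tfrac12|x|^2$ on the unit ball and is constant outside a fixed larger ball. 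Differentiating in time along the flow and recalling \eqref{E:V} yields
\begin{equation*}
\tfrac{d\ }{dt} Z_R(t) = V(u(t)) + \mathcal{E}_R(t),
\end{equation*}
where $\mathcal{E}_R(t)$ collects all the error terms supported in the region $|x|\gtrsim R$; these errors come from the places where the multiplier $\phi_R$ fails to equal the exact virial weight $\tfrac12|x|^2$.

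The \emph{first key step} is to control $\mathcal{E}_R(t)$ uniformly using almost-periodicity. Because the orbit $\{u(t,x-c(t))\}$ is precompact in $H^1$, for any $\eta>0$ there is a radius $\rho(\eta)$ so that the portion of the $H^1$-norm (and hence of $\|\nabla u\|_{L^2}^2$, $\|u\|_{L^4}^4$, and $\|u\|_{L^6}^6$, via Sobolev embedding) living outside the ball of radius $\rho(\eta)$ about $c(t)$ is smaller than $\eta$, uniformly in $t$. Taking $R$ large relative to $\rho(\eta)$ and relative to $\sup_t|c(t)|$ on the relevant time scale, I would bound $|\mathcal{E}_R(t)|\lesssim \eta$ uniformly in $t$; the derivatives of $\phi_R$ are bounded independent of $R$ in the transition region, and the mass term contributes nothing to the virial. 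The \emph{second key step} is the time-averaging: integrating over $[0,T]$ and dividing by $T$ gives
\begin{equation*}
\frac{1}{T}\int_0^T V(u(t))\,dt = \frac{Z_R(T)-Z_R(0)}{T} - \frac{1}{T}\int_0^T \mathcal{E}_R(t)\,dt.
\end{equation*}
The boundary term is controlled by noting $|Z_R(t)|\lesssim R^2 \sup_t \|u(t)\|_{H^1}^2$, so it is $O(R^2/T)\to 0$ as $T\to\infty$ for $R$ fixed; the error integral is $O(\eta)$.

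The \emph{main obstacle}—and the place where the spatial-center hypothesis $c(t)=o(t)$ from \eqref{AP} is essential—is that $R$ cannot be held fixed: to absorb the moving center I must take $R\gtrsim \sup_{0\le t\le T}|c(t)|$, so $R$ grows with $T$. The condition $c(t)=o(t)$ guarantees that I may choose $R=R(T)$ with $R(T)\to\infty$ but $R(T)=o(T)$, so that $R(T)^2/T\to 0$ still holds while $R(T)$ simultaneously exceeds $\rho(\eta)+\sup_{[0,T]}|c(t)|$. Thus the plan is to first fix $\eta$, then send $T\to\infty$ along this coupled choice of $R(T)$ to conclude
\begin{equation*}
\limsup_{T\to\infty}\Bigl|\frac{1}{T}\int_0^T V(u(t))\,dt\Bigr| \lesssim \eta,
\end{equation*}
and finally let $\eta\to 0$. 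The delicate point requiring care is the interplay of the two length scales—ensuring the $o(t)$ rate of the center is compatible with a choice of $R(T)$ that kills both the boundary term and the tail errors simultaneously—which is exactly what the hypothesis \eqref{AP} is designed to furnish.
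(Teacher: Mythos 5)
Your overall strategy is exactly the one the paper invokes: its ``proof'' of Proposition~\ref{P:LV} is simply a citation of the standard localized virial computation in \cite[pp.~542--543]{KOPV}, and your proposal reconstructs that computation --- truncated weight $\phi_R$, tightness of the orbit from \eqref{AP} to control the errors in the transition region, and the scale-matching $R=R(T)$ permitted by $c(t)=o(t)$. However, as written, your argument fails at precisely the step you identify as the delicate one. You bound the boundary term by $|Z_R(t)|\lesssim R^2\sup_t\|u(t)\|_{H^1}^2$ and then assert that choosing $R(T)=o(T)$ makes $R(T)^2/T\to 0$. That implication is false: $R(T)=o(T)$ only gives $R(T)^2/T = o(T)$, which can diverge (e.g.\ $R(T)=T/\log T$). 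To make $R^2/T\to 0$ you would need $R(T)=o(\sqrt{T})$, but almost-periodicity only guarantees you can take $R(T)\gtrsim \rho(\eta)+\sup_{0\le t\le T}|c(t)|$, and the hypothesis $c(t)=o(t)$ controls this quantity only at the scale $o(T)$, not $o(\sqrt{T})$. So the two requirements on $R(T)$ are not compatible under the bound you state.

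The repair is standard and short: the correct estimate on the truncated virial is \emph{linear} in $R$, not quadratic. Indeed, after an integration by parts,
\begin{equation*}
Z_R(t)=\Im\int \overline{u(t,x)}\,\nabla\phi_R(x)\cdot\nabla u(t,x)\,dx,
\qquad
|Z_R(t)|\le \|\nabla\phi_R\|_{L^\infty}\|u(t)\|_{L^2}\|\nabla u(t)\|_{L^2}\lesssim R,
\end{equation*}
since $\nabla\phi_R(x)=R\,\nabla\phi(x/R)$ and $\nabla\phi$ is bounded, while $\|u(t)\|_{H^1}$ is bounded uniformly in $t$ by conservation of mass and energy (or by \eqref{AP}). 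With the boundary term of size $O(R/T)$ rather than $O(R^2/T)$, your choice $R(T)=\rho(\eta)+\sup_{[0,T]}|c(t)|=o(T)$ does the job, and the rest of your argument (fix $\eta$, send $T\to\infty$, then $\eta\to 0$) goes through and yields \eqref{LV-LB}. With this one-line correction your proof is the argument the paper has in mind.
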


\begin{proof} This is the essence of the standard localized virial argument; details can be found in \cite[p. 542--543]{KOPV}.  The only difference is that there the argument is by contradiction since it is already known that for the solutions of interest, $V(u(t))$ is bounded away from zero (uniformly for $t\in\R$).
\end{proof}

In order to upgrade certain qualitative statements from \cite{KOPV} to quantitative ones, we employ the following from \cite{HmidiKeraani2005}:

\begin{proposition}[Profile decomposition for Gagliardo--Nirenberg]\label{P:PDGN} Let $\{u_n\}$ be a bounded sequence in $H^1(\R^3)$.  Passing to a subsequence if necessary, there exist $J^*\in\{0,1,2,\dots\}\cup\{\infty\}$, profiles $\phi^j\in H^1\backslash\{0\}$, and positions $\{x_n^j\}\subset\R^3$, so that
\begin{equation}\label{E:PDGN}
u_n(x) = \sum_{j=1}^J \phi^j(x-x_n^j)+w_n^J(x)
\end{equation}
for each finite $0\leq J\leq J^*$.  Moreover, the following hold:
\begin{align*}
& \lim_{J\to J^*}\limsup_{n\to\infty} \|w_n^J\|_{L^4} = 0, \\
& \sup_J \limsup_{n\to\infty} \ \Bigl| \|u_n\|_{\dot H^1}^2 - \sum_{j=1}^J \|\phi^j\|_{\dot H^1}^2 - \|w_n^J\|_{\dot H^1}^2 \Bigr| = 0, \\
&\sup_J \limsup_{n\to\infty} \ \Bigl| \| u_n\|_{L^q}^q  - \sum_{j=1}^J \|\phi^j\|_{L^q}^q - \|w_n^J\|_{L^q}^q \Bigr| = 0\quad\text{for} \quad q\in\{2, 4,6\}.
\end{align*}
\end{proposition}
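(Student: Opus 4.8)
The plan is to extract the profiles one at a time as weak $H^1$-limits of appropriately translated copies of the successive remainders, exploiting the failure of compactness of the embedding $H^1(\R^3)\hookrightarrow L^4(\R^3)$ to locate each concentration and using a refined Gagliardo--Nirenberg inequality to keep the extraction quantitative. Since \eqref{nls} has no scaling symmetry, spatial translation is the only noncompact symmetry that must be accounted for, which is what makes the decomposition \eqref{E:PDGN} so simple in form.

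The engine of the argument is a single-profile extraction carrying a lower bound. First I would record the refined inequality
\[
\|f\|_{L^4(\R^3)}^4 \lesssim \|f\|_{H^1(\R^3)}^3 \, \sup_{Q}\|f\|_{L^2(Q)},
\]
where $Q$ ranges over a tiling of $\R^3$ by unit cubes; this follows by applying the usual Gagliardo--Nirenberg inequality $\|f\|_{L^4(Q)}^4\lesssim \|f\|_{H^1(Q)}^3\|f\|_{L^2(Q)}$ on each cube, summing, and extracting the factors $\sup_Q\|f\|_{L^2(Q)}$ and $\sup_Q\|f\|_{H^1(Q)}\le\|f\|_{H^1}$. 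Consequently, writing $A$ for a uniform $H^1$-bound of the sequence, if $\limsup_n\|f_n\|_{L^4}=\delta>0$ then after passing to a subsequence there are centers $x_n\in\R^3$ with $\|f_n\|_{L^2(Q(x_n))}\gtrsim \delta^4 A^{-3}$. By Rellich's theorem the translates $f_n(\cdot+x_n)$ then possess a nonzero weak $H^1$-limit $\phi$, with the quantitative bound $\|\phi\|_{H^1}\gtrsim \delta^4 A^{-3}$.

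With this lemma in hand I would build \eqref{E:PDGN} inductively. Set $w_n^0=u_n$; given $w_n^{J-1}$, apply the extraction lemma to produce a profile $\phi^J\in H^1\backslash\{0\}$ and positions $x_n^J$ realizing the weak limit of $w_n^{J-1}(\cdot+x_n^J)$, and set $w_n^J=w_n^{J-1}-\phi^J(\cdot-x_n^J)$. A diagonal argument selects one subsequence valid for all $J$; we stop and put $J^*=J-1$ if $\limsup_n\|w_n^{J-1}\|_{L^4}=0$, and otherwise set $J^*=\infty$. Since $w_n^{J-1}(\cdot+x_n^k)\rightharpoonup 0$ for each $k<J$ by construction, the new centers obey $|x_n^J-x_n^k|\to\infty$, for otherwise $\phi^J$ would vanish. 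The two decoupling identities then come from iterating a single-step relation: weak convergence of $w_n^{J-1}(\cdot+x_n^J)$ to $\phi^J$ kills the cross term and gives
\[
\|w_n^{J-1}\|_{\dot H^1}^2=\|\phi^J\|_{\dot H^1}^2+\|w_n^J\|_{\dot H^1}^2+o(1)\qquad(n\to\infty),
\]
while the Brezis--Lieb lemma, applied after refining to an almost-everywhere convergent subsequence, gives the same relation with $\|\cdot\|_{\dot H^1}^2$ replaced by $\|\cdot\|_{L^q}^q$ for each $q\in\{2,4,6\}$ (both norms being translation invariant). Telescoping over $j=1,\dots,J$ yields, for each fixed finite $J$, the asserted expansions with errors that are $o(1)$ as $n\to\infty$; because the outer quantifier is $\sup_J\limsup_n$, vanishing for each fixed $J$ already delivers the stated uniform claims.

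The step I expect to be the main obstacle is the final interplay that forces the $L^4$ remainder to vanish. The fixed-$J$ $L^2$ and $\dot H^1$ decouplings give $\sum_{j=1}^J\|\phi^j\|_{H^1}^2\le\limsup_n\|u_n\|_{H^1}^2$, so the series $\sum_j\|\phi^j\|_{H^1}^2$ converges and $\|\phi^J\|_{H^1}\to 0$ as $J\to J^*$. Feeding this into the lower bound $\|\phi^J\|_{H^1}\gtrsim(\limsup_n\|w_n^{J-1}\|_{L^4})^4 A^{-3}$ from the extraction lemma forces $\limsup_n\|w_n^{J-1}\|_{L^4}\to 0$, which is exactly the required $L^4$-smallness of the remainders. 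Everything apart from this quantitative refined-inequality input is routine bookkeeping with weak limits and the Brezis--Lieb lemma.
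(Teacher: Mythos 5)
The paper does not actually prove this proposition: it is imported verbatim from \cite{HmidiKeraani2005}, so there is no in-paper proof to compare against. Your argument is correct, and it is essentially the classical proof found in that reference and in the Lions/G\'erard tradition: iterative extraction of nonzero weak limits of translated remainders, made quantitative by a cube-localized Gagliardo--Nirenberg inequality $\|f\|_{L^4}^4\lesssim\|f\|_{H^1}^3\sup_Q\|f\|_{L^2(Q)}$ (plus Rellich on the unit cube to get a nonzero limit with the stated lower bound), Hilbert-space decoupling for the $L^2$ and $\dot H^1$ norms via weak convergence, Brezis--Lieb for $q\in\{4,6\}$, and the summability of $\sum_j\|\phi^j\|_{H^1}^2$ fed back into the extraction lower bound to force $L^4$-vanishing of the remainders. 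The one compressed step is the assertion that $w_n^{J-1}(\cdot+x_n^k)\rightharpoonup 0$ for $k<J$ ``by construction'': this is not immediate from the definition of $w_n^{J-1}$ alone, since $w_n^{J-1}(\cdot+x_n^k)$ contains the terms $\phi^j(\cdot+x_n^k-x_n^j)$ for $k<j\le J-1$, whose weak vanishing requires the divergence $|x_n^j-x_n^k|\to\infty$ that you are in the midst of proving. The fix is the standard simultaneous induction on $J$: assume both the pairwise divergence of the first $J-1$ centers and the weak vanishing of $w_n^{J-1}$ along each of them, deduce $|x_n^J-x_n^k|\to\infty$ exactly by your ``otherwise $\phi^J$ would vanish'' argument, and then propagate both properties to step $J$. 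With that bookkeeping made explicit, your proof is complete and self-contained.
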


Recall that $m_0$ denotes the least (non-zero) mass at which $V(u)=0$ is possible.  This quantity is determined through a variational problem analyzed in \cite{KOPV}, which will be important for analyzing the vertical portion of $\partial \mathcal R$.
 
\begin{proposition}[Interpolation inequality]\label{P31}  Every $u\in H^1(\R^3)$ satisfies
\begin{equation}\label{GNH}
\tfrac34\int |u|^4 \,dx  \leq \sqrt{\tfrac{M(u)}{m_0}}  \int |\nabla u|^2 + |u|^6\,dx . 
\end{equation}
Moreover, there is a non-empty finite set $\Omega\subset(0,\tfrac3{16})$ which characterizes optimizers thus:  $M(u)=m_0$ and equality holds in \eqref{GNH} if and only if $u(x)= e^{i\theta} R_\omega(x-c)$ for some $c\in\R^3$, $\theta\in\R$, and $\omega\in\Omega$.
\end{proposition}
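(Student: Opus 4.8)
The plan is to read \eqref{GNH} directly off the definition of $m_0$ by means of a one-parameter rescaling that leaves the gradient and sextic terms untouched while contracting the quartic term and the mass. For $u\in H^1(\R^3)\setminus\{0\}$ and $\lambda>0$ set $u_\lambda(x):=\lambda^{1/2}u(\lambda x)$; a change of variables gives that $\int|\nabla u_\lambda|^2+|u_\lambda|^6$ is independent of $\lambda$, whereas $\int|u_\lambda|^4=\lambda^{-1}\int|u|^4$ and $M(u_\lambda)=\lambda^{-2}M(u)$. Hence $\lambda\mapsto V(u_\lambda)$ is strictly increasing and vanishes at the unique value $\lambda_*=\tfrac34\int|u|^4\big/\bigl(\int|\nabla u|^2+|u|^6\bigr)$. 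Since $u_{\lambda_*}\not\equiv0$ has zero virial, the definition of $m_0$ forces $\lambda_*^{-2}M(u)=M(u_{\lambda_*})\geq m_0$, i.e. $\lambda_*\leq\sqrt{M(u)/m_0}$; rewriting this in terms of the data of $u$ is exactly \eqref{GNH}.

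For the equality statement, observe that when $M(u)=m_0$ the inequality \eqref{GNH} reads $\tfrac34\int|u|^4\leq\int|\nabla u|^2+|u|^6$, i.e. $V(u)\geq0$, with equality precisely when $V(u)=0$. Thus an $H^1$ function of mass $m_0$ achieves equality in \eqref{GNH} if and only if it optimizes the constrained minimization $m_0=\inf\{M(u):u\in H^1\setminus\{0\},\ V(u)=0\}$, and the entire content of the ``moreover'' clause is the identification of these optimizers with the rescaled solitons \eqref{D:Romega}.

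To produce optimizers I would run concentration compactness. Given a minimizing sequence $u_n$, Proposition~\ref{P:PDGN} yields a profile decomposition whose $\dot H^1$, $L^2$, $L^4$, and $L^6$ norms asymptotically decouple. Vanishing is impossible, since $V=0$ together with $\|u_n\|_{L^4}\to0$ would force $u_n\to0$ in $H^1$ and hence $M=0$. Splitting is excluded by the rescaling above: because the total virial is nonpositive in the limit, some profile $\phi$ has $V(\phi)\leq0$, and contracting it (taking $\lambda\geq1$) produces a zero-virial state of mass $\leq M(\phi)$; were there two or more nontrivial profiles this mass would be strictly below $m_0$, contradicting its definition. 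Hence a single profile survives and the decoupling upgrades to strong $H^1$ convergence to an optimizer $u$, so $\Omega$ is nonempty. Such an optimizer solves the Euler--Lagrange equation $-\Delta u+3|u|^4u-\tfrac32|u|^2u=\tfrac1\mu u$ for a multiplier $\mu$; the substitution $u(x)=\tfrac1{\sqrt2}Q\bigl(\tfrac{\sqrt3}2x\bigr)$ is the unique rescaling converting this into the ground-state equation $-\Delta Q+|Q|^4Q-|Q|^2Q+\omega Q=0$ with $\omega=-\tfrac4{3\mu}$. Symmetric rearrangement (again using the contraction to accommodate the constraint, since rearrangement can only lower the gradient and hence $V$) together with the uniqueness of ground states recalled after \eqref{D:Romega} identifies $Q$ with $P_\omega$; imposing $V(u)=0$ on $u=\tfrac1{\sqrt2}P_\omega(\tfrac{\sqrt3}2\,\cdot\,)$ then yields $V(u)=\tfrac1{3\sqrt3}\|\nabla P_\omega\|_{L^2}^2\bigl(1-\beta(\omega)\bigr)$, forcing $\beta(\omega)=1$. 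Precisely when $\beta(\omega)=1$ one checks $\tfrac1{\sqrt2}P_\omega(\tfrac{\sqrt3}2\,\cdot\,)=R_\omega$, so the optimizers are exactly $e^{i\theta}R_\omega(\cdot-c)$, and $\Omega=\{\omega:\beta(\omega)=1,\ M(R_\omega)=m_0\}$.

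I expect the genuinely hard point to be the finiteness of $\Omega$. The variational argument above delivers the inequality, the existence of optimizers, and their shape as rescaled ground states, but it only shows that $\Omega$ is nonempty. To conclude that $\beta(\omega)=1$ is met at merely finitely many minimal-mass values of $\omega$ one needs to rule out accumulation as $\omega\to0$ or $\omega\to\tfrac3{16}$, which requires the finer (in part numerical) study of the soliton curve $\omega\mapsto(M(P_\omega),E(P_\omega))$ and of $\beta(\omega)$ carried out in \cite{KOPV}; I would import this structural input rather than reprove it here.
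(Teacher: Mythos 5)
Your route is genuinely different from the paper's, and its core is correct. The paper proves \eqref{GNH} by citation: it invokes the sharp Gagliardo--Nirenberg-type inequality \eqref{GNH'} from \cite{KOPV} (the $\alpha=1$ case of their Proposition~3.1), together with the relation $m_0=\sqrt3\,\bigl[\tfrac{16}{9C}\bigr]^2$, and converts one inequality into the other via Young's inequality; the existence and form of optimizers, and the finiteness of $\Omega$, are likewise imported from \cite{KOPV}, with the equality analysis reduced to pinning down the scaling parameters $\lambda=\rho=1$. You instead take the definition of $m_0$ as the minimal mass of a nonzero zero-virial state and get \eqref{GNH} in two lines from the rescaling $u_\lambda=\lambda^{1/2}u(\lambda\cdot)$, which freezes $\int|\nabla u|^2+|u|^6$ while scaling the quartic term and the mass; this is clean, correct, and arguably more transparent than the paper's Young's-inequality derivation. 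Your downstream computations also check out: the Euler--Lagrange equation, the normalization $a=\tfrac1{\sqrt2}$, $b=\tfrac{\sqrt3}2$, the identity $V(u)=\tfrac1{3\sqrt3}\|\nabla P_\omega\|_{L^2}^2(1-\beta(\omega))$ (which tacitly uses the Pohozaev identity $V(P_\omega)=0$; you should say so), and the fact that $\beta(\omega)=1$ makes $\tfrac1{\sqrt2}P_\omega(\tfrac{\sqrt3}2\cdot)=R_\omega$. In effect you re-prove the parts of \cite{KOPV} that the paper merely cites, which buys self-containedness at the cost of length.

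There are, however, two concrete gaps. First, your exclusion of vanishing is false as stated: $V(u_n)=0$ together with $\|u_n\|_{L^4}\to0$ forces $\|\nabla u_n\|_{L^2}\to0$ and $\|u_n\|_{L^6}\to0$, but it says nothing about $\|u_n\|_{L^2}$, so you may not conclude $u_n\to0$ in $H^1$, and there is no contradiction with the mass, which tends to $m_0>0$. The fix is standard but needed: the classical Gagliardo--Nirenberg inequality $\|u\|_{L^4}^4\lesssim\|u\|_{L^2}\|\nabla u\|_{L^2}^3$ combined with $\|\nabla u_n\|_{L^2}^2\leq\tfrac34\|u_n\|_{L^4}^4$ gives a mass-dependent positive lower bound on $\|\nabla u_n\|_{L^2}$ for nonzero zero-virial states, contradicting $\|\nabla u_n\|_{L^2}\to0$. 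Second, you misidentify what is needed for the finiteness of $\Omega$. It is not a ``finer (in part numerical) study'': finiteness follows purely analytically from \cite[Theorem~2.2]{KOPV}, which shows that $\beta$ is real-analytic on $(0,\tfrac3{16})$ with $\beta(\omega)\to0$ as $\omega\to0$ and $\beta(\omega)\to\infty$ as $\omega\to\tfrac3{16}$; hence the level set $\{\beta=1\}\supseteq\Omega$ is finite. The numerics in \cite{KOPV} bear only on the stronger claim that $\Omega$ is a singleton, which the proposition deliberately does not assert (see Remark~\ref{R:argg}); building numerics into the proof would contradict the paper's explicit choice not to rely on them. A lesser gloss: passing from ``the symmetric rearrangement of a minimizer is a ground state'' to ``every minimizer equals $e^{i\theta}R_\omega(\cdot-c)$'' requires the equality case of the rearrangement inequalities and control of the phase of a complex-valued minimizer, which you should at least flag (the paper gets this by citation as well).
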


\begin{remark}\label{R:argg}
The numerics in \cite{KOPV} show that $\Omega$ is actually comprised of a single point; this would also be a consequence of Conjecture~2.6 of that paper.  This is what is depicted in Figure~\ref{F:1}.  While the numerics are very stable and compelling, we do not have a rigorous proof that there is only one value of $\omega$ so that $M(R_\omega)=m_0$.  Thus, we will not make this assumption in what follows.
\end{remark}

\begin{proof}
Choosing $\alpha=1$ in \cite[Proposition~3.1]{KOPV} shows that the optimal constant in the inequality
\begin{equation}\label{GNH'}
\|u\|_{L^4}^4 \leq C \|u\|_{L^2} \|\nabla u\|_{L^2}^{\frac32} \|u\|_{L^6}^{\frac32}
\end{equation}
is realized.  Moreover, every optimizer takes the form $u(x)=e^{i\theta}\lambda R_\omega(\rho(x-c))$ with $\theta\in\R$, $\lambda>0$, $\rho>0$, $c\in\R^3$, and $\beta(\omega)=1$.  In \cite{KOPV}, this is written with $P_\omega$, rather than $R_\omega$, but these assertions are equivalent in view of \eqref{D:Romega}.

The value of $m_0$ was determined in \cite{KOPV} from the optimal constant $C$ in \eqref{GNH'} via
\begin{equation}\label{C and m_0}
m_0 = \sqrt{3} \bigl[\tfrac{16}{9C}\bigr]^2
\end{equation}
for details, see \cite[Lemma~3.3 and Theorem~5.2]{KOPV}. 

From \cite[Theorem~2.2]{KOPV} we see that $\beta$ is an analytic function of $\omega\in(0,\tfrac3{16})$ and that $\beta(\omega)\to 0$ as $\omega\to0$ and $\beta(\omega)\to \infty$ as $\omega\to\tfrac3{16}$.  Thus choosing $\Omega$ to denote the set of all $\omega$ that arise as optimizers, we are guaranteed that $\Omega$ is a finite set.  Note that, at this time, we cannot claim that every solution to $\beta(\omega)=1$ corresponds to an optimizer.

The inequality \eqref{GNH} follows from \label{C and m_0} and \eqref{GNH'} via Young's inequality in the form
$$
3^{1/4} a b \leq \tfrac34 \bigl[ a^{4/3} + b^4 \bigr] \qtq{with equality iff} a^{4/3}=3 b^4.
$$
This requirement for equality places one constraint on the scaling parameters $\rho$ and $\lambda$.  Combining this with the requirement that $M(u)=m_0$ then guarantees that $\lambda=1$ and $\rho=1$ as stated in the proposition.  Details of these computations can also be found in \cite[Lemma~5.5]{KOPV}.
\end{proof}

\section{Proof of Theorem~\ref{T}}

Let us write $\partial\mathcal R_s$ to represent the portion of $\partial\mathcal R$ represented by solitary waves:
$$
\partial\mathcal R_s := \bigl\{ (m,e)\in\partial\mathcal R : (m,e)=\bigl(M(P_\omega),E(P_\omega)\bigr)\text{ for some }\omega\in(0,\tfrac3{16})\bigr\}.
$$
The remainder of the boundary will be denoted $\partial\mathcal R_r$.  Our first goal in this section is to prove Proposition~\ref{P:phase 1}, which shows that scattering holds in a neighborhood of any mass-energy pair $(m,e) \in \partial\mathcal R_r$.

Before we can begin on the novel portion of the analysis, we need one more lemma recapitulating material developed in \cite{KOPV}:

\begin{lemma}\label{L:PS00} Fix $(m,e)\in\partial\mathcal R_r$ and let $u\in H^1$ satisfy $(M(u),E(u))\preccurlyeq (m,e)$.  Then $V(u)\geq 0$.  Moreover, if
$V(u)=0$, then either $u\equiv 0$ or $u(x) = e^{i\theta} R_\omega(x-c)$ for some $c\in\R^3$ and $\theta\in[0,2\pi)$.  The value of $\omega$ is uniquely determined by $E(u)$.
\end{lemma}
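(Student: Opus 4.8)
The plan is to separate two regimes by mass—$M(u)\le m_0$ and $M(u)>m_0$—and in each to establish both the inequality $V(u)\ge 0$ and the accompanying rigidity. Throughout I use that $(m,e)\in\partial\mathcal R$ means $e=E^V_{\min}(m)$, with $E^V_{\min}$ as in \eqref{EVdefn}, and I will lean on two structural facts from \cite{KOPV}: that $E^V_{\min}$ is non-increasing, indeed strictly decreasing, on $[m_0,\infty)$, and that its minimizers are classified, up to the symmetries of \eqref{nls}, as solitons or rescaled solitons.

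When $M(u)\le m_0$, the claim is immediate from Proposition~\ref{P31}. Writing $a=\|\nabla u\|_{L^2}^2$ and $b=\|u\|_{L^6}^6$, inequality \eqref{GNH} reads $\tfrac34\|u\|_{L^4}^4\le\sqrt{M(u)/m_0}\,(a+b)$, so that
\[
V(u)=a+b-\tfrac34\|u\|_{L^4}^4\ \ge\ (a+b)\bigl(1-\sqrt{M(u)/m_0}\bigr)\ \ge\ 0 .
\]
If $V(u)=0$ and $u\not\equiv 0$ (so $a,b>0$), the factor $1-\sqrt{M(u)/m_0}$ must vanish, forcing $M(u)=m_0$ and equality in \eqref{GNH}; Proposition~\ref{P31} then identifies $u=e^{i\theta}R_\omega(\cdot-c)$ for some $\omega\in\Omega$, with $\omega$ read off from $E(u)=E(R_\omega)$. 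This already yields the $u\equiv 0$ alternative as the remaining branch.

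For $M(u)>m_0$ I argue by contradiction using the mass-preserving rescaling $u_\lambda(x)=\lambda^{3/2}u(\lambda x)$, for which $M(u_\lambda)=M(u)$ and
\[
V(u_\lambda)=\lambda^2\bigl(a+\lambda^4 b-\tfrac34\lambda\|u\|_{L^4}^4\bigr),\qquad \tfrac{d}{d\lambda}E(u_\lambda)=\tfrac1\lambda\,V(u_\lambda).
\]
For $u\not\equiv 0$ the bracketed factor is positive at $\lambda=0^+$ and at $\lambda=+\infty$ and is decreasing-then-increasing, so $\lambda\mapsto V(u_\lambda)$ has at most two positive zeros. Were $V(u)<0$, there would be a largest zero $\lambda_+>1$; since $E(u_\lambda)$ strictly decreases on the interval where $V(u_\lambda)<0$, the state $v:=u_{\lambda_+}$ would satisfy $V(v)=0$, $M(v)=M(u)\le m$, and $E(v)<E(u)\le e$. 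As $v$ is then admissible in \eqref{EVdefn}, we have $E^V_{\min}(M(v))\le E(v)$, while monotonicity and $M(v)\le m$ give $E^V_{\min}(M(v))\ge E^V_{\min}(m)=e$; together these read $e\le E(v)<e$, a contradiction. Hence $V(u)\ge 0$. If instead $V(u)=0$, then $u$ is itself admissible in \eqref{EVdefn}, so the chain
\[
E(u)\ \ge\ E^V_{\min}(M(u))\ \ge\ E^V_{\min}(m)=e\ \ge\ E(u)
\]
collapses to equalities; strict monotonicity of $E^V_{\min}$ forces $M(u)=m$, whence $(M(u),E(u))=(m,e)$ and $u$ minimizes \eqref{EVdefn} at this pair.

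The step I expect to be the crux is the final identification of this minimizer as a rescaled soliton. Both the solitons $P_{\omega'}$ and the rescaled solitons $R_\omega$ are zero-virial states—for $P_{\omega'}$ this follows from combining the Nehari and Pohozaev identities, and for $R_\omega$ it is built into \eqref{D:Romega}—so a priori either could minimize \eqref{EVdefn}, and the variational analysis of \cite{KOPV} shows that, up to symmetry, these are the only minimizers. A minimizer of soliton type would place $(m,e)$ on $\partial\mathcal R_s$, which is excluded since $(m,e)\in\partial\mathcal R_r$; therefore $u=e^{i\theta}R_\omega(\cdot-c)$. Because $E^V_{\min}$ is strictly decreasing, the pair $(m,e)$ is recovered from $e=E(u)$ alone, and the injective dependence of the minimizer on this pair then determines $\omega$. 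The genuinely delicate inputs—the monotonicity and strict monotonicity of $E^V_{\min}$, and the clean soliton/rescaled-soliton dichotomy of its minimizers—are inherited from \cite{KOPV} rather than reproved here; granting these, the assembly above is routine.
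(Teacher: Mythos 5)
Your treatment of the two main claims is correct, and it is more self-contained than the paper's proof, which is essentially citation-based: for $M(u)\le m_0$ you use Proposition~\ref{P31} exactly as the paper does, while for $M(u)>m_0$ you reconstruct, via the mass-preserving rescaling $u_\lambda=\lambda^{3/2}u(\lambda\,\cdot)$ and the identity $\tfrac{d}{d\lambda}E(u_\lambda)=\lambda^{-1}V(u_\lambda)$, what the paper imports wholesale from \cite[Theorems~5.2 and~5.6]{KOPV}. The reduction is sound: convexity of $\lambda\mapsto a+\lambda^4b-\tfrac34\lambda\|u\|_{L^4}^4$ gives at most two zeros; flowing to the largest zero produces, when $V(u)<0$, an admissible competitor $v$ for \eqref{EVdefn} with $M(v)=M(u)\le m$ and $E(v)<e$, contradicting monotonicity of $E^V_{\min}$; and when $V(u)=0$ with $u\not\equiv0$, your chain of inequalities plus strict monotonicity pins $u$ down as a minimizer at exactly $(m,e)$, which the optimizer classification (and exclusion of the soliton branch by $(m,e)\in\partial\mathcal R_r$) identifies as a rescaled soliton. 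One remark: your blanket opening claim that $(m,e)\in\partial\mathcal R$ means $e=E^V_{\min}(m)$ is false on the vertical portion of the boundary, but this is harmless because you only invoke it when $M(u)>m_0$, which forces $m\ge M(u)>m_0$ and hence places $(m,e)$ on the graph of $E^V_{\min}$.

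The gap is in the final claim of the lemma, that $\omega$ is uniquely determined by $E(u)$. You justify this by ``the injective dependence of the minimizer on this pair,'' but that is not among the two structural facts you declared as inputs, and it does not follow from them: the classification says every optimizer at a given mass has the form $e^{i\theta}R_\omega(\cdot-c)$ for \emph{some} $\omega$, and nothing in it forbids two distinct values $\omega\neq\omega'$ from yielding optimizers with the same mass and energy. Worse, on the vertical portion $m=m_0$ the profiles produced by Proposition~\ref{P31} are $R_\omega$ with $\omega\in\Omega$, and by Remark~\ref{R:argg} the set $\Omega$ may contain several elements; those with $E(R_\omega)>E^V_{\min}(m_0)$ are not minimizers of \eqref{EVdefn} at all, so no uniqueness statement about minimizers can distinguish between them. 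What is actually needed is the strict monotonicity of $\omega\mapsto E(R_\omega)$, i.e.\ \cite[Equation~(5.19)]{KOPV}, which is precisely the additional fact the paper cites at this point. You should add this injectivity to your list of imported facts; it cannot be derived from the monotonicity of $E^V_{\min}$ and the optimizer classification alone.
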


\begin{proof}
When $m=m_0$, both $V(u)\geq 0$ and the identification of cases of equality follow from Proposition~\ref{P31}.  When $m>m_0$, we rely instead on \cite[Theorem~5.6]{KOPV}, which identifies optimizers for \eqref{EVdefn} at fixed mass, and \cite[Theorem~5.2]{KOPV}, which shows strict monotonicity of $m\mapsto E^V_{\min}(m)$.

Finally, the monotonicity \cite[Equation (5.19)]{KOPV} guarantees that $E(R_\omega)$ uniquely determines $\omega$.
\end{proof}

For our purposes, it is not enough merely to understand which functions achieve zero virial.  Rather, we need to understand minimizing sequences:

\begin{lemma}\label{L:PS3} Fix $(m,e)\in\partial\mathcal R_r$ and let $u_n$ be a sequence in $H^1(\R^3)$ with 
$M(u_n)=m$ and $E(u_n)=e$.  If $V(u_n) \to 0$ then there are $x_n\in\R^3$ and $\theta_n\in[0,2\pi)$ so that
\begin{equation}\label{E:to R}
e^{i\theta_n} u_n(x - x_n) \longrightarrow R_\omega(x)\quad\text{in $H^1(\R^3)$,}
\end{equation}
where $\omega$ is uniquely determined by $E(R_\omega)=e$.
\end{lemma}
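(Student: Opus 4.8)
The plan is to run a concentration-compactness argument on the sequence $u_n$, combining the Gagliardo--Nirenberg profile decomposition of Proposition~\ref{P:PDGN} with the rigidity of zero-virial states supplied by Lemma~\ref{L:PS00}. Throughout, abbreviate $K(u)=\int|\nabla u|^2$, $S(u)=\int|u|^6$, $Q(u)=\int|u|^4$, so that $E(u)=\tfrac12 K+\tfrac16 S-\tfrac14 Q$ and $V(u)=K+S-\tfrac34 Q$. The first step is to bound $\{u_n\}$ in $H^1$; as the mass is fixed, it suffices to control $K(u_n)$. Eliminating $Q$ between the identities for $E$ and $V$ gives $K(u_n)-S(u_n)=6E(u_n)-2V(u_n)\to 6e$, so along any hypothetical unbounded subsequence one has $K(u_n)\approx S(u_n)\to\infty$ and $Q(u_n)=\tfrac43(K(u_n)+S(u_n))-\tfrac43 V(u_n)\approx\tfrac83 S(u_n)$. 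Feeding this into the sharp inequality \eqref{GNH'}, whose optimal constant is tied to $m_0$ via \eqref{C and m_0}, forces $\sqrt{m/m_0}$ above a definite threshold; a short computation identifies that threshold as exactly $\sqrt{m_2/m_0}$ (recall $27m_0^2=16m_2^2$). Since $(m,e)\in\partial\mathcal R_r$ requires $m<m_2$, no unbounded subsequence exists. Passing to a subsequence, I then apply Proposition~\ref{P:PDGN} to write $u_n=\sum_{j\le J}\phi^j(\cdot-x_n^j)+w_n^J$.

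The second step exploits the decoupling. The stated splitting of the $L^2,L^4,L^6,\dot H^1$ norms propagates to $M$, $Q$, $S$, $K$, and hence to both $E$ and $V$, across the profiles and the remainder. In particular the masses obey $\sum_j M(\phi^j)\le m<m_2$, so each profile and each remainder has mass below $m_2$; because every nonzero $H^1$ state of mass below $m_2$ has strictly positive energy, every such term has nonnegative energy, and as the energies sum to $e$ each is $\le e$. Thus $(M(\phi^j),E(\phi^j))\preccurlyeq(m,e)$ for every $j$, and likewise $(M(w_n^J),E(w_n^J))\preccurlyeq(m,e)$ in the limit. Lemma~\ref{L:PS00} then gives $V(\phi^j)\ge0$ and $\liminf_n V(w_n^J)\ge0$, and substituting into $0=\lim_n V(u_n)=\sum_{j\le J}V(\phi^j)+\lim_n V(w_n^J)+o_J(1)$ forces $V(\phi^j)=0$ for every $j$ together with $\lim_n V(w_n^J)=0$. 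By the rigidity half of Lemma~\ref{L:PS00}, each nonzero profile has the form $\phi^j=e^{i\theta_j}R_{\omega_j}(\cdot-c_j)$.

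The third step is mass quantization. Every rescaled soliton $R_{\omega_j}$ has $V=0$ and hence mass at least $m_0$, so since $m<m_2<2m_0$ at most one profile is nonzero; and at least one is nonzero, for otherwise $\|u_n\|_{L^4}\to0$, whence $V(u_n)\to0$ forces $K(u_n)+S(u_n)\to0$ and then $E(u_n)\to0$, contradicting $e>0$. Thus there is exactly one profile $\phi^1=e^{i\theta_1}R_{\omega_1}(\cdot-c_1)$. The remainder obeys $\|w_n^1\|_{L^4}\to0$ and $V(w_n^1)\to0$, so $K(w_n^1)+S(w_n^1)=V(w_n^1)+\tfrac34 Q(w_n^1)\to0$ and $w_n^1\to0$ in $\dot H^1$, $L^4$, and $L^6$; consequently $E(w_n^1)\to0$, and energy decoupling yields $E(R_{\omega_1})=e$. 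The monotonicity recorded in Lemma~\ref{L:PS00} pins $\omega_1$ down as the unique $\omega$ with $E(R_\omega)=e$, and since $(m,e)$ lies on the rescaled-soliton curve this gives $M(R_{\omega_1})=m$. Mass decoupling then forces $M(w_n^1)\to0$, upgrading the convergence to $H^1$; absorbing $c_1$ and setting $\theta_n=-\theta_1$, $x_n=-x_n^1$ produces \eqref{E:to R}.

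I expect the main obstacle to be the bookkeeping that converts the single relation $V(u_n)\to0$ into the assertion that \emph{each} profile has vanishing virial. This rests on two inputs that are easy to overlook: the positivity of energy below mass $m_2$, which is what guarantees no profile carries energy exceeding $e$ and so keeps every piece within the reach of Lemma~\ref{L:PS00}; and the exact location of $(m,e)$ on the rescaled-soliton curve, which is needed only at the very end, since a remainder with vanishing $\dot H^1$, $L^4$, and $L^6$ norms may still carry $L^2$ mass, and only the curve identification excludes this.
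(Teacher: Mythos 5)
Your overall strategy --- the profile decomposition of Proposition~\ref{P:PDGN} combined with the rigidity of zero-virial states from Lemma~\ref{L:PS00} --- is the same as the paper's, and most of your execution is sound. Indeed, two of your variations are welcome: your opening $H^1$-boundedness step is a correct argument for something the paper passes over in silence (Proposition~\ref{P:PDGN} does require a bounded sequence), and your threshold computation checks out: \eqref{GNH'} forces $m\geq \tfrac{64}{9C^2}$, which equals $m_2$ by the formula $m_0=\sqrt{3}\,[16/(9C)]^2$ and $27m_0^2=16m_2^2$. Likewise, your mass-quantization argument for ``at most one profile'' (every nonzero zero-virial state has mass $\geq m_0$, and $m<m_2<2m_0$) is a clean, more elementary substitute for the paper's appeal to the monotonicity of $m\mapsto E^V_{\min}(m)$. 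One technical slip along the way: you invoke Lemma~\ref{L:PS00} for the remainders $w_n^J$, whose mass-energy pairs satisfy $\preccurlyeq(m,e)$ only asymptotically, whereas the lemma requires the constraint exactly; this is harmless, since $V(w_n^J)\geq -\tfrac34\|w_n^J\|_{L^4}^4$ and the right-hand side vanishes in the double limit, which is all you actually need.

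The genuine gap is in your final step, where you deduce $M(R_{\omega_1})=m$ ``since $(m,e)$ lies on the rescaled-soliton curve,'' and in your closing claim that ``only the curve identification'' prevents the remainder from carrying off $L^2$ mass. This tacitly identifies $\partial\mathcal R_r$ with the rescaled-soliton curve, which is false: $\partial\mathcal R_r$ also contains the vertical segment $\{m_0\}\times\bigl(E^V_{\min}(m_0),\infty\bigr)$ (the dotted line in Figure~\ref{F:1}), none of whose points is a soliton pair, and at most finitely many of which lie on the curve --- namely the points $(m_0,E(R_\omega))$ with $\omega\in\Omega$, and Remark~\ref{R:argg} expressly declines to assume $|\Omega|=1$. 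For $(m,e)=(m_0,e)$ on this segment, your argument produces the single profile $R_{\omega_1}$ with $E(R_{\omega_1})=e$, but then offers no justification that its mass is all of $m$ rather than strictly less, with the deficit escaping in $w_n^1$; the curve identification you rely on is simply unavailable there. The repair is a fact you already used earlier in your own proof: $R_{\omega_1}$ is a nonzero zero-virial state, so $M(R_{\omega_1})\geq m_0=m$ by the definition of $m_0$, while mass decoupling gives $M(R_{\omega_1})\leq m$; hence $M(w_n^1)\to 0$ and the convergence upgrades to $H^1$. (This also shows, a posteriori, that the non-vacuous vertical-segment instances are exactly the points $(m_0,E(R_\omega))$, $\omega\in\Omega$.) The paper's proof covers both portions of $\partial\mathcal R_r$ in one stroke by combining the mass/energy constraints \eqref{E:to Ra} with the monotonicity of $E^V_{\min}$, for which the convention $E^V_{\min}(m')=\infty$ for $m'<m_0$ encodes precisely the minimality of $m_0$ that your write-up omits.
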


\begin{proof}
We apply the profile decomposition of Proposition~\ref{P:PDGN} to the sequence $u_n$.  The remaining analysis will be confined to the resulting subsequence where \eqref{E:PDGN} holds.  This suffices, for if the claim were false, there would be a sequence $u_n$ without any subsequence satisfying \eqref{E:to R}.

From the conclusions of Proposition~\ref{P:PDGN}, we know that the profiles satisfy
\begin{equation}\label{E:to Ra}
 \limsup_{J\to J^*} \ \limsup_{n\to\infty}\  \biggl[M(w_n^J) + \sum_{j=1}^J M(\phi^j)\biggr] = m, \qquad \sum_{j=1}^{J^*}  E(\phi^j ) \leq e, 
\end{equation}
and moreover,
\begin{equation}\label{E:to Rb}
0 =  \limsup_{J\to J^*} \ \limsup_{n\to\infty}\ \biggl[ \| \nabla w_n^J\|_{L^2}^2 + \| w_n^J \|_{L^6}^6 + \sum_{j=1}^J V(\phi_j) \biggr].
\end{equation}

From these relations and Lemma~\ref{L:PS00}, we see that any $\phi^j$ (which are always non-zero) must agree with some (a priori $j$-dependent) rescaled soliton $R_\omega$ up to a translation and phase rotation.  Modifying the symmetry parameters in \eqref{E:PDGN}, if necessary, we may assume that the profile is exactly $R_\omega$.  Lemma~\ref{L:PS00} and \eqref{E:to Rb} also show that $\nabla w_n^J \to 0$ in $L^2$.

In view of the mass and energy constraints in \eqref{E:to Ra} and the monotonicity of $m\mapsto E_{\min}^V(m)$ for $m\geq m_0$, we are left with two possibilities: Either there is exactly one profile $\phi^1=R_\omega$ and $w_n\to 0$ in $L^2$-sense, or $u_n = w_n$ for all $n$.  The former case yields \eqref{E:to R}.  The latter case is incompatible with $V(u_n)\to 0$, because $w_n\to 0$ in $L^4$ and so $\liminf_{n\to\infty} V(w_n) \geq E(w_n) \equiv e$.
\end{proof}

Recalling that our ambition is to contradict \eqref{LV-LB} (and thereby preclude solutions obeying \eqref{AP}), we can now see how Lemma~\ref{L:PS3} helps:  It shows that the virial will remain bounded away from zero unless the trajectory passes near $R_\omega$.  To handle the remaining case, we introduce the notation
\begin{equation}\label{rho defn}
\rho_\omega(u):=\inf\Big\{ \bigl\|u(x)-e^{i\theta}R_{\omega}(x-c)\bigr\|_{H^1} : c\in\R^3,\ \theta\in\R\Bigr\}
\end{equation}
and prove the following:

\begin{lemma}\label{L:delta'} Fix $\omega\in(0,\frac3{16})$.  Then there exists $\delta>0$ so that every solution $u$ to \eqref{nls} with $\rho_\omega(u(0))\leq \delta$ satisfies
\begin{equation}\label{E:delta'}
\int_{-1}^1 V(u(t))\,dt \geq \delta .
\end{equation}
\end{lemma}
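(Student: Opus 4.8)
The plan is to argue by contradiction, using nothing beyond the local theory for \eqref{nls} together with the rigidity already recorded in Lemma~\ref{L:PS00}. It suffices to produce $\eta,\delta_0>0$ so that $\rho_\omega(u(0))\le\delta_0$ forces $\int_{-1}^{1}V(u(t))\,dt\ge\eta$, since \eqref{E:delta'} then holds with $\delta=\min\{\eta,\delta_0\}$. So suppose no such pair exists. Then there are solutions $u_n$ to \eqref{nls} with $\rho_\omega(u_n(0))\to0$ and $\int_{-1}^{1}V(u_n(t))\,dt\to0$. Both the flow \eqref{nls} and the functional $V$ of \eqref{E:V} are invariant under translations and phase rotations, so after applying these symmetries to each $u_n$ (which alters neither hypothesis) I may assume $u_n(0)\to R_\omega$ in $H^1$.

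Next I would upgrade this to convergence of the whole trajectories. On the compact interval $[-1,1]$ the solution $U$ with $U(0)=R_\omega$ has finite $L^{10}_{t,x}([-1,1]\times\R^3)$ norm, so the stability theory for \eqref{nls} gives $u_n\to U$ in $C([-1,1];H^1)$. Since $V$ is continuous on $H^1$, it follows that $\int_{-1}^{1}V(U(t))\,dt=0$. Conservation of mass and energy keeps $(M(U(t)),E(U(t)))$ equal to $(M(R_\omega),E(R_\omega))$; as this pair lies on $\partial\mathcal R_r$ in every setting where the lemma is applied, Lemma~\ref{L:PS00} gives $V(U(t))\ge0$ at each time, and together with the vanishing integral and continuity of $t\mapsto V(U(t))$ this forces $V(U(t))=0$ for all $t\in[-1,1]$. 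Because $\|U(t)\|_{L^2}=\|R_\omega\|_{L^2}\ne0$, the equality case of Lemma~\ref{L:PS00} then yields $U(t)=e^{i\theta(t)}R_\omega(x-c(t))$ for each $t$: the trajectory never leaves the orbit of $R_\omega$.

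The crux is to contradict this last conclusion, which is the quantitative form of the principle that only genuine solitary waves keep $V$ pinned at zero. First I would invoke conservation of momentum: since $R_\omega$ is real, $\Im\int\overline{U}\,\nabla U\,dx=0$, so the center of mass $\int x\,|U(t)|^2\,dx=M(R_\omega)\,c(t)$ is constant and hence $c(t)\equiv c_0$; translating, I take $c_0=0$. Then $U(t)=e^{i\theta(t)}R_\omega$, and as $U\in C^1_tH^{-1}$ with $R_\omega$ a fixed nonzero function, $\theta$ is $C^1$. Substituting into \eqref{nls} and cancelling the phase gives $\Delta R_\omega+|R_\omega|^2R_\omega-|R_\omega|^4R_\omega=\theta'(t)\,R_\omega$; the left side is independent of $t$, so $\theta'\equiv\sigma$ is constant and $R_\omega$ solves the standing-wave equation $-\Delta R_\omega-|R_\omega|^2R_\omega+|R_\omega|^4R_\omega+\sigma R_\omega=0$. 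Writing $R_\omega(x)=\lambda P_\omega(\mu x)$ as in \eqref{D:Romega} and comparing with the equation satisfied by $P_\omega$, the linear independence of $P_\omega$, $|P_\omega|^2P_\omega$, and $|P_\omega|^4P_\omega$ forces $\lambda=\mu=1$; that is, $R_\omega=P_\omega$ is itself a solitary wave, which is excluded on $\partial\mathcal R_r$. This contradiction proves the lemma.

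I expect the genuine obstacle to be precisely this final rigidity step: converting ``the trajectory stays on the orbit'' into a contradiction. The first two paragraphs are soft---standard compactness and continuous dependence---and by themselves only localize a potential failure at the orbit of $R_\omega$. The real input is that the dilation in \eqref{D:Romega} is nontrivial away from solitons, so $R_\omega$ satisfies no standing-wave equation and the flow must immediately generate strictly positive virial. It is worth stressing that the hypothesis $(M(R_\omega),E(R_\omega))\in\partial\mathcal R_r$, rather than $\partial\mathcal R_s$, is essential here: at the exceptional frequency where $R_\omega$ coincides with $P_\omega$, both share $V=0$ as static profiles, yet only the soliton produces a standing wave $e^{i\sigma t}R_\omega$ along which $V\equiv0$, so no bound of the form \eqref{E:delta'} could hold there.
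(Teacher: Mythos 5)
Your proof is correct, and it shares the paper's overall skeleton---reduce to the trajectory $U$ with data $R_\omega$ via continuous dependence, show that vanishing of $\int_{-1}^1 V(U(t))\,dt$ together with Lemma~\ref{L:PS00} pins the trajectory to the orbit of $R_\omega$, and then derive a contradiction with $(M(R_\omega),E(R_\omega))\in\partial\mathcal R_r$---but your implementation of the final rigidity step is genuinely different. The paper kills the translation parameter $c(t)$ softly: the data $R_\omega$ is radial, so by uniqueness and rotation invariance the solution stays radial, forcing $c(t)\equiv 0$; and it kills the phase without any differentiability argument, using uniqueness plus gauge and time-translation invariance to get the cocycle relation $\theta(t+s)=\theta(t)+\theta(s)$, hence $\theta(t)=\xi t$, so that $v$ is a standing wave---``inconsistent'' with $\partial\mathcal R_r$. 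You instead use momentum conservation and the center-of-mass identity (which needs the finite-variance justification you elide, though this is harmless here since the orbit of $R_\omega$ has uniform exponential decay), then differentiate the phase and substitute into \eqref{nls} to get a stationary equation, and finally run the explicit scaling computation: with $R_\omega=\lambda P_\omega(\mu\cdot)$ the coefficients $(\lambda^4-\mu^2)$, $(\mu^2-\lambda^2)$, $(\sigma-\mu^2\omega)$ must all vanish by linear independence of $P_\omega, P_\omega^3, P_\omega^5$, forcing $\lambda=\mu=1$. This buys you something the paper leaves implicit: the paper's final sentence tacitly requires knowing that a solitary-wave solution with profile $R_\omega$ places the mass-energy pair in $\partial\mathcal R_s$, which strictly speaking invokes uniqueness of positive radial decreasing solutions to identify $R_\omega$ with a ground state; your computation identifies $R_\omega=P_\omega$ (equivalently $\beta(\omega)=1/3$) directly and quantifies exactly where rigidity fails. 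You also correctly flag---as the paper's own proof does only implicitly, via its closing appeal to ``the assumption''---that the lemma must be read with the hypothesis $(M(R_\omega),E(R_\omega))\in\partial\mathcal R_r$, since at a frequency with $R_\omega=P_\omega$ the standing wave $e^{i\omega t}P_\omega$ keeps $V\equiv 0$ and \eqref{E:delta'} genuinely fails.
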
 

\begin{proof}
The crux of the proof is the following: If $v$ is the solution to \eqref{nls} with initial data $v(0)=R_\omega$, then
\begin{equation}\label{E:delta''}
\int_{-1}^1 V(v(t))\,dt  >0 .
\end{equation}
We will prove this shortly.  As $V$ is continuous on $H^1$, it follows from this assertion and local well-posedness that there is a $\delta>0$ so that \eqref{E:delta'} holds for every solution with $\|u(0)-R_\omega\|_{H^1}\leq \delta$.  This extends to solutions with $\rho_\omega(u)\leq\delta$ due to the gauge and translation symmetries of \eqref{nls}.

We now turn our attention to \eqref{E:delta''}, arguing by contradiction.  From Lemma~\ref{L:PS3}, we know $V(v(t))\geq 0$ and thus the failure of \eqref{E:delta''} ensures that $V(v(t))\equiv 0$ for $t\in[-1,1]$.  But this in turn guarantees
$$
v(t,x) = e^{i\theta(t)} R_\omega(x-x(t))
$$
for some (continuous) functions $\theta(t)$ and $x(t)$.  In view of the uniqueness of $H^1$-solutions to \eqref{nls}, the rotational symmetry of \eqref{nls} and the initial data $v(0)$ guarantee that $x(t)\equiv 0$.  Analogously, combining gauge symmetry and time-translation invariance guarantee that $\theta(t+s)=\theta(t)+\theta(s)$ whenever $s,t,s+t\in[-1,1]$. This in turn shows that $\theta(t)=\xi t$ for some $\xi\in\R$ and consequently that $v(t,x)=e^{i\xi t} R_\omega(x)$ for $t\in[-1,1]$.  By gauge invariance this shows $v$ to be a solitary wave solution for all $t\in \R$, which is inconsistent with the assumption $(M(R_\omega),E(R_\omega))\in \partial\mathcal R_r$.
\end{proof}

With Lemmas~\ref{L:PS3} and~\ref{L:delta'} in place, we are now ready to demonstrate the existence of the new, larger scattering region that satisfies property (b) listed in Theorem~\ref{T}, completing phase one of our argument.  The region will be further enlarged in Proposition~\ref{P:phase 2} so that property (a) also holds.

\begin{proposition}\label{P:phase 1}
Given any pair $(m,e)\in\partial\mathcal R_r$,  there exist $\eps>0$ and $C<\infty$ so that
for any solution $u$ to \eqref{nls},
$$
(M(u),E(u))\preccurlyeq (m+\eps,e+\eps) \qtq{implies} \|u\|_{L_{t,x}^{10}(\R\times\R^3)} \leq C.
$$
\end{proposition}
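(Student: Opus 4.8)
The plan is to deduce this proposition directly from Proposition~\ref{P:cc}. Every pair $(m,e)\in\partial\mathcal R_r$ lies strictly to the left of the soliton crossing, so $m<m_2$ and $e>0$, and Proposition~\ref{P:cc} applies at $(m,e)$. Its conclusion (i) is precisely the assertion of Proposition~\ref{P:phase 1} (take $C=\eps^{-1}$), so it suffices to rule out alternative (ii). Assume, for contradiction, that there is a global almost periodic solution $u$ obeying \eqref{AP} with $(M(u),E(u))\preccurlyeq(m,e)$. By conservation of mass and energy together with Lemma~\ref{L:PS00}, we have $V(u(t))\geq 0$ for all $t$, while Proposition~\ref{P:LV} forces the time average of $V(u(t))$ to vanish. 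The entire task is therefore to contradict \eqref{LV-LB} by proving a uniform-in-time lower bound on the virial integrated over unit time windows.

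The central object is the compact set $K:=\overline{\{u(t,\cdot-c(t)):t\in\R\}}\subset H^1$, which omits $0$ (otherwise precompactness would produce times $t_n$ with $\|u(t_n)\|_{H^1}\to 0$, and small-data theory would contradict almost periodicity). Since $V$ is translation invariant, it is enough to bound $\int_{-1}^1 V(v(t))\,dt$ from below uniformly over solutions $v$ with $v(0)\in K$. First I would show this integral is strictly positive for each such $v$. Indeed, if $\int_{-1}^1 V(v(t))\,dt=0$ then, as $V(v(t))\geq 0$ by Lemma~\ref{L:PS00} (valid since $(M(v(0)),E(v(0)))\preccurlyeq(m,e)$), we must have $V(v(t))\equiv 0$ on $[-1,1]$. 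By Lemma~\ref{L:PS00} this forces $v(t)=e^{i\theta(t)}R_{\omega}(\cdot-c(t))$ with $\omega$ constant (it is pinned by the conserved energy), and then the gauge, translation, and time-translation symmetry argument already used to establish \eqref{E:delta''} identifies $v(t)=e^{i\xi t}R_\omega$ as a solitary wave—incompatible with $(M(R_\omega),E(R_\omega))\in\partial\mathcal R_r$. Hence $\int_{-1}^1 V(v(t))\,dt>0$ for every $v(0)\in K$.

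To make this quantitative and uniform, I would combine Lemmas~\ref{L:PS3} and~\ref{L:delta'}. Through the compactness of $K$, Lemma~\ref{L:PS3} shows that $V$ is bounded below by some $\eta>0$ on the portion of $K$ staying a fixed distance $\delta$ from every relevant rescaled soliton $R_\omega$; the finitely many admissible $\omega$ are exactly those for which $R_\omega$ can arise as a zero-virial limit, all lying on $\partial\mathcal R_r$ by Lemma~\ref{L:PS00}. On the complementary part, where $\rho_\omega(v)\leq\delta$ for some admissible $\omega$, Lemma~\ref{L:delta'} supplies $\int_{-1}^1 V\geq\delta$ outright. Merging the two regimes produces a single constant $2\eta>0$ with $\int_{t-1}^{t+1}V(u(s))\,ds\geq 2\eta$ for all $t$, whence $\tfrac1T\int_0^T V(u(t))\,dt\geq\eta$ for all large $T$, contradicting \eqref{LV-LB}.

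The main obstacle I anticipate is this uniform-in-time lower bound, and two points within it demand care. First, the solution produced by Proposition~\ref{P:cc} is only known to satisfy $(M(u),E(u))\preccurlyeq(m,e)$ rather than to sit exactly on $\partial\mathcal R_r$; this is why one must work through Lemma~\ref{L:PS00}, which is valid for all $\preccurlyeq$-predecessors, rather than the exact-mass-energy Lemma~\ref{L:PS3}, and must confirm that every zero-virial limit $R_\omega$ appearing in $K$ still belongs to the non-soliton boundary $\partial\mathcal R_r$, so that the solitary-wave obstruction of Lemma~\ref{L:delta'} genuinely applies. Second, passing from an instantaneous lower bound on $V$ to a lower bound on its integral over unit windows requires either the compactness plus continuous-dependence packaging sketched above or some bookkeeping to splice together the near-soliton and far-from-soliton time intervals. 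The continuous-dependence route appears cleanest, since it reduces the whole matter to the single already-established fact that no trajectory can maintain $V\equiv 0$ on a time interval without being a solitary wave, which cannot occur at a point of $\partial\mathcal R_r$.
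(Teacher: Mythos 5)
Your proposal is correct, but it reaches the contradiction with \eqref{LV-LB} by a genuinely different route than the paper. The paper first pins the counterexample down: since scattering holds throughout $\mathcal R$ by the main theorem of \cite{KOPV}, any almost periodic solution with mass-energy $\preccurlyeq(m,e)$ must have its mass-energy pair lying \emph{exactly} on $\partial\mathcal R_r$; this is what licenses applying Lemma~\ref{L:PS3} (which requires exact boundary mass-energy) along the trajectory. The paper then splits into two cases according to whether or not this pair is achieved by some $R_\omega$: in the first case Lemma~\ref{L:PS3} gives the uniform pointwise bound \eqref{V big}; in the second it splices the pointwise bound \eqref{E:ps33} (far from $R_\omega$) with the unit-window bound of Lemma~\ref{L:delta'} (near $R_\omega$) by a Besicovitch-type covering of the near-soliton times, producing $\int_0^T V \gtrsim T$. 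Your argument instead works directly on the compact orbit closure $K$ and needs neither Lemma~\ref{L:PS3} nor the covering lemma nor the case distinction: the map $v_0\mapsto\int_{-1}^1 V(v(s))\,ds$ is continuous on $H^1$ by well-posedness, strictly positive on $K$ by the rigidity argument (zero virial on a time interval forces a solitary wave, which is excluded because every zero-virial predecessor of a point of $\partial\mathcal R_r$ has its pair on $\partial\mathcal R_r$, hence is not a soliton pair), and therefore uniformly bounded below on $K$ by compactness; summing disjoint unit windows then contradicts \eqref{LV-LB}. What each buys: the paper's version produces explicit constants and isolates the compactness input in two reusable lemmas, whereas yours is shorter and unified but leans entirely on precompactness of the orbit and continuity of the flow map, so it yields no quantitative information beyond what is needed to contradict \eqref{LV-LB}. (Incidentally, your route sidesteps a typo in the paper: its set $K_T$ should be defined with $\rho_\omega(u(t))\leq\delta$, not $\geq\delta$, for the covering step to parse.)

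One caution: your third paragraph, taken by itself, has two gaps that your final paragraph repairs. First, invoking Lemma~\ref{L:PS3} on $K$ is not justified until the counterexample's pair is known to lie exactly on $\partial\mathcal R_r$ (the paper's extra step); your switch to Lemma~\ref{L:PS00} plus compactness is the correct fix, and the needed confirmation that any limiting $R_\omega$ has pair on $\partial\mathcal R_r$ follows from the strict monotonicity of $E^V_{\min}$ cited in Lemma~\ref{L:PS00}. Second, a pointwise bound $V(u(t))\geq\eta$ at a single far-from-soliton time does not by itself bound $\int_{t-1}^{t+1}V$, so ``merging the two regimes'' into a single window bound requires either the paper's covering argument or your continuous-dependence packaging. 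Since you ultimately adopt the latter, the proof as proposed is complete.
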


\begin{proof}
In view of Proposition~\ref{P:cc}, it suffices to show that there are no solutions $u$ satisfying \eqref{AP} with $(M(u),E(u))\preccurlyeq (m,e)$.  This in turn will be effected by falsifying \eqref{LV-LB}.

Suppose that $u$ is a solution satisfying \eqref{AP} and $(M(u),E(u))\preccurlyeq (m,e)$.  By the main theorem from \cite{KOPV}, we must have $(M(u),E(u))\in \partial\mathcal R_r$.  We first consider the case that this mass-energy pair does \emph{not} coincide with that of any $R_\omega$.  This assumption combined with Lemma~\ref{L:PS3} shows that $V(u(t_n))\to 0$ is impossible for any sequence of times $t_n$.  Thus it follows that
\begin{align}\label{V big}
V(u(t))\gtrsim 1 \text{ uniformly for $t\in\R$},
\end{align}
which clearly contradicts \eqref{LV-LB}. 

Suppose now that the mass and energy of $u$ do coincide with those of a (necessarily unique) rescaled solitary wave $R_\omega$.  Let $\delta>0$ be that given by Lemma~\ref{L:delta'} for this value of $\omega$.  By Lemma~\ref{L:PS3}, there exists $\eps>0$ so that
\begin{equation}\label{E:ps33}
V(u(t)) \geq \eps \qtq{whenever} \rho_\omega(u(t)) \geq \delta .
\end{equation}
Now given $T>0$ large, let $K_T=\{t\in[1,T-1] : \rho_\omega(u(t))\geq \delta\}$.  The (trivial) one-dimensional Besicovich covering lemma shows that one can find a finite collection of times $t_i\in K_T$ so that
$$
K_T \subseteq K_T^* := \bigcup \, [t_i-1,t_i+1]
$$
and no $t\in\R$ belongs to more than two intervals $[t_i-1,t_i+1]$.   Applying \eqref{E:delta'} to each such interval, \eqref{E:ps33} on $[1,T-1] \setminus K_T^*$, and neglecting the positive contribution of any points not included, we find
$$
\int_0^T  V(u(t))\,dt \geq \tfrac12 \delta |K_T^*|  + \eps \bigl| [1,T-1] \setminus K_T^* \bigr| .
$$
As neither $\delta$ nor $\eps$ depend on $T$, this conclusion is inconsistent with \eqref{LV-LB} and so completes the proof of the proposition in the case that the pair $(m,e)\in\partial \mathcal R_r$ coincides with that of some rescaled soliton.
\end{proof}

\begin{proposition}\label{P:phase 2}
There exists $m>m_0$ and a function $C:[0,\infty)\to \R$ so that any solution $u$ to \eqref{nls} with $M(u)\leq m$ satisfies
\begin{equation}\label{2stb}
 \|u\|_{L_{t,x}^{10}(\R\times\R^3)} \leq C(E(u)).
\end{equation}
\end{proposition}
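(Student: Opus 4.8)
The plan is to bridge two regimes with a single energy threshold $E_*$: for energies below $E_*$ scattering is already provided by Proposition~\ref{P:phase 1}, while for energies above $E_*$ I will force the virial to be strictly positive and invoke Proposition~\ref{P:LV}. The point driving everything is that staying below $E_*$ costs us only a bounded amount of extra mass, whereas the virial positivity in the second regime is available at \emph{every} mass; a large choice of $E_*$ then lets the two regimes overlap at masses slightly above $m_0$.

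The key estimate is a crude but universal coercivity bound for the virial. Comparing \eqref{E:V} with the definition of $E$ gives the identity $V(u)=2E(u)+\tfrac23\|u\|_{L^6}^6-\tfrac14\|u\|_{L^4}^4$. Inserting the elementary interpolation inequality $\|u\|_{L^4}^4\le \|u\|_{L^2}\,\|u\|_{L^6}^3$ and minimizing the resulting quadratic in the variable $\|u\|_{L^6}^3$ yields
\[
V(u)\ \ge\ 2E(u)-\tfrac{3}{128}M(u)\qquad\text{for all } u\in H^1(\R^3).
\]
No sharp constant enters here, so the bound holds at all masses; its significance is that $E(u)>\tfrac{3}{256}M(u)$ already forces $V(u)>0$, and, since $M$ and $E$ are conserved, this positivity persists along the whole trajectory.

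Now I fix the parameters. Set $\eta_0:=\tfrac12\min\{1,\,m_2-m_0\}$ and choose $E_*$ large enough that $E_*>E^V_{\min}(m_0)$ and $2E_*>\tfrac{3}{128}(m_0+\eta_0)$. Because $(m_0,E_*)$ then lies on the vertical part of $\partial\mathcal R_r$, Proposition~\ref{P:phase 1} provides $\eps$ (which we may shrink to lie in $(0,\eta_0]$) and $C<\infty$ with $\|u\|_{L^{10}_{t,x}}\le C$ whenever $(M(u),E(u))\preccurlyeq(m_0+\eps,E_*+\eps)$; put $\bar m:=m_0+\eps$, so that $\bar m<m_2$ and $2E_*>\tfrac{3}{128}\bar m$. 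I then claim that no solution $u$ obeying \eqref{AP} can have $M(u)\le\bar m$. Indeed, if $E(u)\le E_*$ then $(M(u),E(u))$ lies in the rectangle above, so $\|u\|_{L^{10}_{t,x}}\le C<\infty$ and $u$ would scatter---impossible for a nonzero solution with the compactness property \eqref{AP}; while if $E(u)>E_*$ then the coercivity bound gives $V(u(t))\ge 2E(u)-\tfrac{3}{128}M(u)>2E_*-\tfrac{3}{128}\bar m>0$ for all $t$, contradicting Proposition~\ref{P:LV}. Having excluded \eqref{AP}, I apply Proposition~\ref{P:cc} at $(\bar m,E(u))$ for each solution with $M(u)\le\bar m$: its alternative (ii) is now impossible, so alternative (i) furnishes the desired bound $\|u\|_{L^{10}_{t,x}}\le C(E(u))$ and establishes \eqref{2stb}.

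I expect the only real obstacle to be the bookkeeping that makes the two regimes mesh. One must arrange that the virial-positivity threshold $\tfrac{3}{256}\bar m$ sits below $E_*$ even though $\bar m$ is determined, via Proposition~\ref{P:phase 1}, only after $E_*$ has been chosen. This is possible precisely because the extra mass $\eps$ is uniformly bounded---it may be capped at $\eta_0$ without spoiling the rectangle---so $\bar m$ stays below $m_0+\eta_0$ no matter how large $E_*$ is taken; choosing $E_*$ large then simultaneously places $(m_0,E_*)$ far up the vertical boundary and guarantees $2E_*>\tfrac{3}{128}\bar m$.
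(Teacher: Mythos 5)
Your proof is correct and follows essentially the same route as the paper: scattering in the low-energy rectangle via Proposition~\ref{P:phase 1}, a pointwise coercivity bound $V(u)\geq 2E(u)-\tfrac{3}{128}M(u)$ giving uniform virial positivity at high energies (the paper obtains the same conclusion by completing the square, with the cruder constant $\tfrac{3}{64}$), and the conclusion via Propositions~\ref{P:LV} and~\ref{P:cc}. The only differences are implementational: the paper fixes $E_*=m_2$ and invokes a covering argument, whereas you apply Proposition~\ref{P:phase 1} once at the vertical boundary point $(m_0,E_*)$ and cap $\eps$ by $\eta_0$ to untangle the order in which $E_*$ and $m$ are chosen.
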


\begin{proof}
Proposition~\ref{P:phase 1} together with a simple covering argument shows that for any choice of $E_*$, there is an $m >m_0$ so that \eqref{2stb} holds for all solutions with $M(u)\leq m$ and $E(u)\leq E_*$.

We choose $E_*=m_2$, which is guaranteed to be larger than the corresponding $m$ due to the existence of solitary waves.  It follows that if $M(u) \leq m$ and $E(u) \geq E_*$, then
$$
V(u) = 2E(u)  - \tfrac3{64} M(u) + \tfrac13 \int |u|^2\bigl[ |u|^2 - \tfrac38]^2\,dx \geq m_2 .
$$
It is evident from Proposition~\ref{P:LV} that no solution to \eqref{nls} can satisfy both these mass-energy constraints and \eqref{AP}.  On the other hand, $m$ was chosen so that no solutions can satisfy \eqref{AP} with $(M(u),E(u))\preccurlyeq (m,E_*)$.   This proves the proposition, since Proposition~\ref{P:cc} shows that the failure of \eqref{2stb} would produce exactly the type of solution that we have just precluded.
\end{proof}


\end{document}